\documentclass[12pt]{amsart}

\setlength{\textwidth}{150mm}
\setlength{\textheight}{220mm}
\setlength{\oddsidemargin}{5.5mm}
\setlength{\evensidemargin}{5.5mm}

\usepackage{amssymb}
\usepackage{amscd}

\title{Bounds for the number of Galois points for plane curves}
\author{Satoru Fukasawa}

\subjclass[2000]{Primary 14H50; Secondary 12F10}
\keywords{Galois point, plane curve, positive characteristic}
\address{Department of Mathematical Sciences, Faculty of Science, Yamagata University, Kojirakawa-machi 1-4-12, Yamagata 990-8560, Japan}
\email{s.fukasawa@sci.kj.yamagata-u.ac.jp} 
\thanks{The author was partially supported by JSPS KAKENHI Grant Number 25800002.}


\newtheorem{theorem}{Theorem}[section]
\newtheorem{proposition}[theorem]{Proposition} 
\newtheorem{corollary}[theorem]{Corollary}
\newtheorem{lemma}[theorem]{Lemma}
\newtheorem{fact}[theorem]{Fact}

\theoremstyle{definition}
 
\newtheorem{remark}[theorem]{Remark}

\begin{document}
\begin{abstract} 
A point on a plane curve is said to be Galois (for the curve) if the projection from the point as a map from the curve to a line induces a Galois extension of function fields. 
It is known that the number of Galois points is finite except for a certain explicit example. 
We establish upper bounds for the number of Galois points for all plane curves other than the example in terms of the genus, degree and the generic order of contact, and settle curves attaining the bounds. 
\end{abstract}
\maketitle

\section{Introduction}  
In 1996, H. Yoshihara introduced the notion of {\it Galois point} (\cite{miura-yoshihara, yoshihara1}). 
Let $C \subset \Bbb P^2$ be an irreducible plane curve of degree $d \ge 4$ over an algebraically closed field $K$ of characteristic $p \ge 0$ and let $K(C)$ be its function field. 
A point $P \in C$ is said to be (inner) Galois for $C$, if the function field extension $K(C)/\pi_P^*K(\Bbb P^1)$ induced by the projection $\pi_P: C \dashrightarrow \Bbb P^1$ from $P$ is Galois. 
We denote by $\delta(C)$ (resp. $\delta_s(C)$) the number of Galois points in the smooth (resp. singular) locus of $C$. 
It would be interesting to settle $\delta(C)$ or $\delta(C)+\delta_s(C)$.  
For example, there are applications of the distribution of Galois points to finite geometry (see \cite{fhk, oyono-ritzenthaler}). 

When $p=0$, Yoshihara and K. Miura \cite{miura-yoshihara, yoshihara1} showed that $\delta(C)=0, 1$ or $4$ for smooth curves. 
In $p>0$, for the Fermat curve $H$ of degree $p^e+1$, M. Homma \cite{homma2} proved that $\delta(H)=(p^e)^3+1$. 
Recently, the present author \cite{fukasawa1} showed that $\delta(C)=0, 1$ or $d$ for any other smooth curve $C$. 
As a next step, it would be nice to {\it give an upper bound for $\delta(C)$} for all irreducible plane curves $C$. 
Miura \cite{miura} gave a certain inequality related to $\delta(C)$ if $p=0$ and $d-1$ is prime. 
The present author and T. Hasegawa \cite{fukasawa-hasegawa} settled the case $\delta(C)=\infty$. 
We call this case (FH).  
In \cite{fukasawa3}, the present author showed that $\delta(C) \le (d-1)^3+1$ for all irreducible plane curves $C$ if $p \ne 2$ and $C$ is not in the case (FH). 

Let $M(C)$ be the generic order of contact, i.e. $I_P(C, T_PC)=M(C)$ for a general point $P \in C$.  
In this paper, we establish a sharper bound in arbitrary characteristic, as follows. 

\begin{theorem} \label{main theorem}
Let $C \subset \Bbb P^2$ be an irreducible plane curve of degree $d \ge 4$ in characteristic $p \ge 0$ and let $g$ be the geometric genus. 
Assume that $C$ is not in the case (FH). 
Then, 
$$\delta(C) \le (M(C)+1)(2g-2)+3d. $$ 
Furthermore, the equality holds if and only if $p>0$, $d=p^e+1$ for some $e>0$ and $C$ is projectively equivalent to one of the following plane curves:  
\begin{itemize}
\item[(1)] Fermat curve. 
\item[(2)] The image of the morphism 
$$ \Bbb P^1 \rightarrow \Bbb P^2; \ (s:t) \mapsto (s^{p^e+1}:(s+t)^{p^e+1}:t^{p^e+1}). $$ 
\end{itemize}
\end{theorem}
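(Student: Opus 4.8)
The plan is to recognize the right-hand side as the degree of an inflection (ramification) divisor and to prove that every inner Galois point is an inflection point of the most extreme possible type.

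First I would prove the structural lemma that every inner Galois point $P$ satisfies $I_P(C,T_PC)=d$, i.e. the tangent line at $P$ meets $C$ only at $P$. The input is the standard realization of the Galois group $G_P$ of $\pi_P$ as a group of perspectivities of $\Bbb P^2$ with center $P$: each $\sigma\in G_P$ fixes $P$ and carries every line through $P$ to itself. In particular every element of $G_P$ fixes $P$, so the inertia (stabilizer) group of the Galois covering $\pi_P\colon \tilde C\to\Bbb P^1$ at $P$ is all of $G_P$, of order $d-1$; hence $\pi_P$ is totally ramified at $P$ with ramification index $d-1$. Writing $C$ locally at the smooth point $P$ as $y=cx^{m}+\cdots$ with $m=I_P(C,T_PC)$ and pulling back a uniformizer on $\Bbb P^1$ shows that this index equals $m-1$, whence $I_P(C,T_PC)=d$. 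I would take care that both the identification of the index with $m-1$ and the equality of the inertia order with the ramification index survive wild ramification.

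Next I would identify the bound with a ramification divisor. The lines of $\Bbb P^2$ cut out on $C$ a linear system of dimension $2$ and degree $d$ whose order sequence is $(\epsilon_0,\epsilon_1,\epsilon_2)=(0,1,M(C))$, since a general point of $C$ is smooth (so $\epsilon_1=1$) and $\epsilon_2=M(C)$ by the definition of the generic order of contact. By the St\"ohr--Voloch theory the associated ramification divisor $R$ satisfies $\deg R=(\epsilon_0+\epsilon_1+\epsilon_2)(2g-2)+3d=(M(C)+1)(2g-2)+3d$, which is exactly the claimed quantity. By the structural lemma each inner Galois point $P$ has contact data $(0,1,d)$; using that $M(C)\le d-1$ (which one checks holds outside the case (FH)), its contact order $d$ strictly exceeds the generic $\epsilon_2=M(C)$, so $P$ lies in the support of $R$ with multiplicity $v_P(R)\ge d-M(C)\ge 1$. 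As the $\delta(C)$ smooth Galois points are distinct, summing their contributions gives $\delta(C)\le\deg R=(M(C)+1)(2g-2)+3d$.

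Finally, for the equality clause I would read the constraints off the inequality chain: equality forces $v_P(R)=d-M(C)=1$ at every Galois point and forces $R$ to be reduced and supported exactly on the set of smooth Galois points. In particular $M(C)=d-1\ge 3$. Since the order set $\{0,1,M(C)\}$ must be $p$-adically closed, the possibility $M(C)>2$ forces $M(C)$ to be a power of $p$; hence $p>0$ and $d=p^e+1$. The remaining, and hardest, step is the classification: from the rigid data of $d=p^e+1$ maximal-contact smooth Galois points whose perspectivity groups account for all of $R$, one must recover $C$ up to projective equivalence. The smooth case falls under \cite{fukasawa1} and produces the Fermat (Hermitian) curve; the rational case must be shown to produce exactly the parametrized curve (2), by analyzing the configuration of the centers and the group generated by the $G_P$. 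I expect this reconstruction, together with the verification that (1) and (2) actually attain equality, to be the main obstacle.
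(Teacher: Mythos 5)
Your argument hinges on the structural lemma that every inner Galois point $P$ satisfies $I_P(C,T_PC)=d$, and that lemma is false. The paper itself records this (Remark 4.1, citing Example 1 of \cite{miura}): a smooth Galois point need not even be a flex, e.g.\ singular quartics in characteristic $0$ admit Galois points with $I_P(C,T_PC)=2=M(C)$. Your proof of the lemma breaks at two places. First, the realization of $G_P$ as a group of perspectivities of $\Bbb P^2$ with center $P$ is available essentially only for smooth plane curves, where birational automorphisms are linear; for singular $C$ the elements of $G_P$ are merely birational maps of $\hat C$ and need not extend to $\Bbb P^2$. Second, and independently, ``every element of $G_P$ fixes $P$'' does not make the inertia group at $\hat P$ all of $G_P$: the fiber of $\hat\pi_P$ through $\hat P$ is $r^{-1}(C\cap T_PC)$, the Galois group permutes it transitively, and by orbit--stabilizer the inertia group at $\hat P$ has order $e_{\hat P}=I_P(C,T_PC)-1$; asserting this equals $d-1$ is assuming exactly what you set out to prove. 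What actually follows from Lemma \ref{index} and Fact \ref{Galois covering} is only that all points of the fiber have equal ramification index dividing $d-1$ --- the fiber may well contain several points.

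Because the lemma fails, a single application of the St\"ohr--Voloch count cannot carry the theorem. The paper obtains only the weaker statement $\Delta\subset F(\hat C)$, and only when $M(C)\ge 3$ (Section 4), where each flex contributes weight at least $1$ and Fact \ref{flexes} gives the bound; the genuinely hard case $M(C)=2$ occupies Sections 5 and 6, where a Galois point may contribute nothing to the flex divisor and one must instead extract, via Fact \ref{rational Galois covering} and Riemann--Hurwitz applied to $\hat\pi_P$, either flexes located elsewhere on $C$ or singular points of the dual curve $C^*$, and then play the flex count against the genus formula for $C^*$ (together with Propositions \ref{cusp} and \ref{unramified} to control singular branches). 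None of that machinery appears in your outline. Your endgame --- forcing $M(C)=d-1=p^e$ and invoking Ballico--Hefez, \cite{fukasawa1} and \cite{fukasawa2} --- does match the paper's treatment of the equality case, but the paper reaches it only after the $M(C)\ge 3$ analysis (including Kaji's theorem and a dual-curve genus estimate to rule out $M(C)<d-1$ when $g\ge 2$), not from the false pointwise inequality $v_P(R)\ge d-M(C)$.
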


It follows from \cite{fukasawa2} that, for curves in Theorem \ref{main theorem}(2), $\delta(C)=d=(p^e+1)(-2)+3d$ and $\delta_s(C)=(d-1)(d-2)/2$. 
Since the number of singular points of $C$ is at most $(d-1)(d-2)/2-g$ by genus formula (\cite[V. Example 3.9.2]{hartshorne}, \cite[p.135]{hkt}), we have the following. 

\begin{corollary} 
Under the same assumption as in Theorem \ref{main theorem}, 
$$\delta(C) +\delta_s(C) \le (M(C)+1)(2g-2)+3d+\frac{(d-1)(d-2)}{2}-g. $$ 
Furthermore, the equality holds if and only if the same condition as in Theorem \ref{main theorem} holds. 
\end{corollary}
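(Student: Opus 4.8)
The plan is to deduce the corollary directly from Theorem \ref{main theorem} together with the classical genus formula, splitting the count $\delta(C)+\delta_s(C)$ into its two constituents and bounding each separately. First I would invoke Theorem \ref{main theorem} to obtain $\delta(C) \le (M(C)+1)(2g-2)+3d$, which is already available under the stated hypotheses. For the singular contribution, the key observation is that every Galois point counted by $\delta_s(C)$ is in particular a singular point of $C$, so $\delta_s(C)$ is bounded above by the total number of singular points. By the genus formula (\cite[V. Example 3.9.2]{hartshorne}, \cite[p.135]{hkt}), the number of singular points of an irreducible plane curve of degree $d$ and geometric genus $g$ is at most $(d-1)(d-2)/2-g$, so $\delta_s(C) \le (d-1)(d-2)/2-g$. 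Adding the two bounds yields the desired inequality
$$\delta(C)+\delta_s(C) \le (M(C)+1)(2g-2)+3d+\frac{(d-1)(d-2)}{2}-g.$$

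Next I would treat the equality statement, beginning with the forward direction. Suppose equality holds in the displayed bound. Since its right-hand side is exactly the sum of the two upper bounds just established, and each summand on the left is at most its corresponding bound, equality can hold only if both inequalities are simultaneously equalities; here I use the elementary fact that $a\le A$ and $b\le B$ with $a+b=A+B$ force $a=A$ and $b=B$. In particular $\delta(C)=(M(C)+1)(2g-2)+3d$, and the equality clause of Theorem \ref{main theorem} then forces $p>0$, $d=p^e+1$, and $C$ to be projectively equivalent to the Fermat curve or to the image of the morphism in Theorem \ref{main theorem}(2) --- precisely the condition asserted in the corollary.

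For the converse I would verify that each of the two exceptional families saturates both bounds. If $C$ is one of these curves, Theorem \ref{main theorem} gives $\delta(C)=(M(C)+1)(2g-2)+3d$ immediately, so only the singular bound needs checking. For the Fermat curve the situation is trivial: it is smooth, so $\delta_s(C)=0$, while $g=(d-1)(d-2)/2$ makes the bound $(d-1)(d-2)/2-g$ vanish as well. For the curve in Theorem \ref{main theorem}(2) I would appeal to \cite{fukasawa2}, which records that $C$ is rational, hence $g=0$, and that $\delta_s(C)=(d-1)(d-2)/2$; thus $\delta_s(C)=(d-1)(d-2)/2-g$ and equality is attained. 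Summing the two contributions gives equality in the corollary for both families.

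In contrast to Theorem \ref{main theorem} itself, this step presents no genuine obstacle: all of the difficulty is already concentrated in the theorem's equality analysis and in the computations of $\delta_s$ and $g$ for the examples quoted from \cite{fukasawa2}. The only points that demand any care are purely bookkeeping --- confirming that the separate equality conditions for $\delta(C)$ and for $\delta_s(C)$ are compatible, and that the curves in (1) and (2) really attain both at once --- and these are resolved by the genus and singularity data above. I would therefore expect the proof to be short, with its substance residing entirely in the cited inputs.
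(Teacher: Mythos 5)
Your proposal is correct and follows essentially the same route as the paper: the author likewise combines the bound of Theorem \ref{main theorem} with the genus-formula estimate $\delta_s(C)\le (d-1)(d-2)/2-g$ on the number of singular points, and verifies equality for the two exceptional families using that the Fermat curve is smooth of genus $(d-1)(d-2)/2$ while the curve in (2) is rational with $\delta_s(C)=(d-1)(d-2)/2$ by \cite{fukasawa2}. Your explicit bookkeeping of the two simultaneous equalities is just a spelled-out version of what the paper leaves implicit.
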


In $p=0$, it is well known that $M(C)=2$ (see, for example, \cite{kleiman1, pardini}). 
Therefore, we have the following. 

\begin{corollary}
Let $C \subset \Bbb P^2$ be an irreducible plane curve of degree $d \ge 4$ in characteristic $p=0$ and let $g$ be the geometric genus. 
Then, 
$$\delta(C) < 3(2g-2)+3d. $$ 

\end{corollary}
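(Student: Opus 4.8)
The plan is to deduce this directly from Theorem \ref{main theorem} together with the classical fact that the generic order of contact equals $2$ in characteristic zero. First I would recall that when $p=0$ one has $M(C)=2$: a general point of $C$ is a smooth point whose tangent line meets $C$ there with intersection multiplicity exactly $2$, equivalently the Gauss map is generically unramified (this is standard; see \cite{kleiman1, pardini}). Substituting $M(C)=2$ into the bound of Theorem \ref{main theorem} turns $(M(C)+1)(2g-2)+3d$ into $3(2g-2)+3d$, which is exactly the right-hand side of the asserted inequality.

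Next I would verify that the hypothesis of Theorem \ref{main theorem} that $C$ is not in the case (FH) holds automatically here, since the corollary as stated omits it. The case (FH), where $\delta(C)=\infty$, is a purely positive-characteristic phenomenon in the classification of Fukasawa and Hasegawa \cite{fukasawa-hasegawa}; in characteristic zero the number of inner Galois points is always finite, so $C$ cannot lie in (FH). Hence Theorem \ref{main theorem} applies, yielding $\delta(C) \le 3(2g-2)+3d$.

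Finally, to sharpen the non-strict bound to a strict one I would read off the equality clause of Theorem \ref{main theorem}. Equality in $\delta(C) \le (M(C)+1)(2g-2)+3d$ forces $p>0$ (indeed $d=p^e+1$ and $C$ projectively equivalent to one of the two listed curves). Since we assume $p=0$, equality is impossible, and therefore $\delta(C) < 3(2g-2)+3d$.

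The only genuinely delicate point is the second step, namely confirming that (FH) does not arise in characteristic zero, because this is what licenses invoking Theorem \ref{main theorem} even though the (FH) hypothesis is absent from the corollary's statement. The remaining steps are a direct substitution and a reading-off of the equality criterion, so I expect no further difficulty.
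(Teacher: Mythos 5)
Your proposal is correct and matches the paper's own derivation: the paper likewise obtains this corollary by substituting the classical fact $M(C)=2$ (valid in characteristic zero) into Theorem \ref{main theorem}, with strictness following because the equality case requires $p>0$. Your explicit verification that the case (FH) cannot occur in characteristic zero is a point the paper leaves implicit, but it is the same argument.
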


Nowadays six types of plane curves $C$ with $\delta(C) \ge 2$ are known (Table in \cite{open}). 
Our results imply that the three of them ((FH) and curves in Theorem \ref{main theorem}(1)(2)) are characterized by the number $\delta(C)$ or $\delta(C)+\delta_s(C)$. 

Our results can be considered as an application of projective geometry in positive characteristic (including the theories of projective duality and Weierstrass points, Pl\"ucker formula and Kaji's theorem, see \cite{hefez-kleiman, kaji, kleiman1, kleiman2, piene, stohr-voloch}).

\section{Preliminaries}
Let $(X:Y:Z)$ be a system of homogeneous coordinates of the projective plane $\Bbb P^2$ and let $C_{\rm sm}$ be the smooth locus of a plane curve $C$.  
When $P \in C_{\rm sm}$, we denote by $T_PC \subset \Bbb P^2$ the (projective) tangent line at $P$. 
For a projective line $\ell \subset \Bbb P^2$ and a point $P \in C \cap \ell$, $I_P(C, \ell)$ means the intersection multiplicity of $C$ and $\ell$ at $P$.  
We denote by $\overline{PR}$ the line passing through points $P$ and $R$ when $R \ne P$, and by $\pi_P$ the projection from a point $P \in \Bbb P^2$. 
The projection $\pi_P$ is represented by $R \mapsto \overline{PR}$.  
Let $r: \hat{C} \rightarrow C$ be the normalization and let $g$ be the genus of $\hat{C}$.  
We write $\hat{\pi}_P=\pi_P \circ r$. 
We denote by $e_{\hat{R}}$ the ramification index of $\hat{\pi}_P$ at $\hat{R} \in \hat{C}$. 
If $R=r(\hat{R}) \in C_{\rm sm}$, then we denote $e_{\hat{R}}$ also by $e_R$.   
It is not difficult to check the following.  

\begin{lemma} \label{index}
Let $P \in \Bbb P^2$ and let $\hat{R} \in \hat{C}$ with $r(\hat{R})=R \ne P$. 
Then for $\hat{\pi}_P$ we have the following. 
\begin{itemize}
\item[(1)] If $P \in C_{\rm sm}$, then $e_P=I_P(C, T_PC)-1$.  
\item[(2)] If $h$ is a linear polynomial defining $\overline{RP}$, then $e_{\hat{R}}={\rm ord}_{\hat{R}}r^*h$. 
In particular, if $R$ is smooth, then $e_R =I_R(C, \overline{PR})$.  
\end{itemize} 
\end{lemma}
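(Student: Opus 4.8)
The plan is to reduce both statements to a local computation on $\hat{C}$ after writing the projection $\hat{\pi}_P$ through two linear forms vanishing at $P$. Concretely, I would normalize coordinates so that $P=(0:0:1)$ and express $\hat{\pi}_P$ as $(L_0:L_1)$, where $L_0,L_1$ are two linearly independent linear forms vanishing at $P$, i.e. two of the lines through the center $P$; the map sends a point to $(L_0:L_1)$ evaluated there. By definition $e_{\hat{R}}$ is the order of vanishing at $\hat{R}$ of $r^{*}u$, where $u$ is a local uniformizer of $\Bbb P^1$ centered at $\hat{\pi}_P(\hat{R})$.

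For part (2), since $R\ne P$ I would choose $L_1$ to be a line through $P$ not passing through $R$, so that $r^{*}L_1$ is a unit of the local ring at $\hat{R}$, and set $(a:b)=(L_0(R):L_1(R))$ with $b\ne 0$. Then $u=L_0/L_1-a/b$ is a uniformizer at the image point, and because the line $\overline{RP}$ is cut out by $h=bL_0-aL_1$ up to a scalar, one obtains $r^{*}u=r^{*}h/(b\,r^{*}L_1)$. As $r^{*}L_1$ is a unit, $\mathrm{ord}_{\hat{R}}\,r^{*}u=\mathrm{ord}_{\hat{R}}\,r^{*}h$, which is $e_{\hat{R}}$. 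When $R$ is smooth the quantity $\mathrm{ord}_{\hat{R}}\,r^{*}h$ is exactly the intersection multiplicity $I_R(C,\overline{PR})$ by definition, giving the last assertion.

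For part (1), here $\hat{R}=\hat{P}$ is the unique point over the smooth center $P$; although $\pi_P$ is only a rational map undefined at $P$, it extends to a morphism on $\hat{C}$, and $e_P$ means its ramification index at $\hat{P}$. I would pass to affine coordinates $(x,y)=(X/Z,Y/Z)$ with $P$ the origin and $T_PC=\{y=0\}$, and pick a local parameter $t$ at $\hat{P}$ with $x=t$, possible since $P$ is smooth. By definition of the tangent line $\mathrm{ord}_t\,r^{*}y=I_P(C,T_PC)=:m$, so $r^{*}y$ equals $t^{m}$ times a unit. Since $\hat{\pi}_P=(x:y)=(1:y/x)$ near $\hat{P}$ and $y/x$ is $t^{m-1}$ times a unit, reading off the order of the uniformizer $y/x$ yields $e_P=m-1=I_P(C,T_PC)-1$.

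The computations are elementary, so I expect the only real care to lie in two bookkeeping points: in part (1) one must first extend the rational map $\pi_P$ across the center $\hat{P}$ on the normalization before the ramification index makes sense, and throughout one must verify that the auxiliary linear forms restrict to units at $\hat{R}$ so that orders of vanishing genuinely agree with intersection multiplicities and with the exponent of the chosen uniformizer.
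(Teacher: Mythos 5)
Your proposal is correct, and the paper itself gives no proof of this lemma (it is stated with only the remark that it is ``not difficult to check''), so there is nothing to diverge from: your local computation --- expressing $\hat{\pi}_P$ via two linear forms through $P$, pulling back a uniformizer at the image point, and for part (1) extending the projection across $\hat{P}$ and reading off the order of $y/x$ --- is exactly the standard verification the author intends. Both bookkeeping points you flag (extending the rational map over the center, and checking the auxiliary linear form is a unit at $\hat{R}$) are handled correctly.
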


Let $\check{\Bbb P}^2$ be the dual projective plane. 
The dual map $\gamma: C_{\rm sm} \rightarrow \check{\Bbb P}^2$ is given by $P \mapsto T_PC$ and the dual curve $C^* \subset \check{\Bbb P}^2$ is the closure of the image of $\gamma$. 
We denote by $q(\gamma)$ (resp. $s(\gamma)$) the inseparable (resp. separable) degree of the field extension induced by the dual map $\gamma$ of $C$ onto $C^*$. 
It is known that $q(\gamma)=1$ if and only if $p \ne 2$ and $M(C)=2$, and $s(\gamma)=1$ in this case (see, for example, \cite[Proposition 1.5]{pardini}). 
If $q(\gamma) \ge 2$, then it follows from a theorem of Hefez and Kleiman (\cite[(3.4)]{hefez-kleiman}) that $M(C)=q(\gamma)$. 
For strange curves, see \cite{bayer-hefez, kleiman1}. 
Note that the projection $\hat{\pi}_Q$ from a point $Q$ is not separable if and only if $C$ is strange and $Q$ is the strange center.

The order sequence of the morphism $r:\hat{C} \rightarrow \Bbb P^2$ is $\{0, 1, M(C)\}$ (see \cite[Ch. 7]{hkt}, \cite{stohr-voloch}). 
If $\hat{R} \in \hat{C}$ is a non-singular branch, i.e. there exists a line defined by $h=0$ with ${\rm ord}_{\hat{R}}r^*h=1$, then there exists a unique tangent line at $R=r(\hat{R})$ defined by $h_{\hat{R}}=0$ such that ${\rm ord}_{\hat{R}}r^*h_{\hat{R}} \ge M(C)$.  
We denote by $T_{\hat{R}}C \subset \Bbb P^2$ this tangent line, and by $\nu_{\hat{R}}$ the order ${\rm ord}_{\hat{R}}r^*h_{\hat{R}}$ of the tangent line $h_{\hat{R}}=0$ at $\hat{R}$. 
If $\nu_{\hat{R}}-M(C)>0$, then we call the point $\hat{R}$ (or $R=r(\hat{R})$ if $R \in C_{\rm sm}$) a {\it flex}.  
We denote by $\hat{C}_0 \subset \hat{C}$ the set of all non-singular branches and by $F(\hat{C}) \subset \hat{C}_0$ the set of all flexes.  
We recall the following two facts (see \cite[Theorem 1.5]{stohr-voloch}, \cite{piene}).  

\begin{fact}[Count of flexes] \label{flexes} We have
$$ \sum_{\hat{R} \in \hat{C}_0} (\nu_{\hat{R}}-M(C)) \le (M(C)+1)(2g-2)+3d. $$
\end{fact}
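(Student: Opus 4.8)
The plan is to recognize this as a direct instance of the St\"ohr--Voloch ramification bound applied to the linear system $\mathfrak{g}^2_d$ cut out on $\hat{C}$ by the lines of $\Bbb P^2$ through the morphism $r$. As recalled just above the statement, the order sequence of this series is $\{\epsilon_0, \epsilon_1, \epsilon_2\} = \{0, 1, M(C)\}$, and the associated line bundle $r^*\mathcal{O}(1)$ has degree $d$. First I would invoke the St\"ohr--Voloch theorem (\cite[Theorem 1.5]{stohr-voloch}, \cite{piene}), which attaches to this data an effective \emph{ramification divisor} $R$ on $\hat{C}$ of degree
$$\deg R = (\epsilon_0 + \epsilon_1 + \epsilon_2)(2g-2) + 3d = (M(C)+1)(2g-2) + 3d,$$
which is exactly the right-hand side of the asserted inequality; here the coefficient $3 = n+1$ (with $n=2$) and the factor $2g-2$ are the universal constants of this Pl\"ucker-type formula.

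The second step is to compute the local contribution of $R$ at a branch, which by St\"ohr--Voloch equals $\sum_{i=0}^{2} (j_i(\hat{R}) - \epsilon_i)$, where $j_0(\hat{R}) < j_1(\hat{R}) < j_2(\hat{R})$ is the local order sequence of the pencil of lines at $\hat{R}$. For a non-singular branch $\hat{R} \in \hat{C}_0$ I would argue that this sequence is precisely $\{0, 1, \nu_{\hat{R}}\}$: one has $j_0 = 0$ because a general line misses $R$; $j_1 = 1$ because $\hat{R}$ is non-singular, i.e. by definition some line $h=0$ satisfies ${\rm ord}_{\hat{R}}r^*h = 1$; and $j_2 = \nu_{\hat{R}}$ since the tangent line $T_{\hat{R}}C$ is the unique line through $R$ realizing the maximal order $\nu_{\hat{R}}$, which is the very definition of $\nu_{\hat{R}}$. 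Hence the local contribution at such $\hat{R}$ is
$$(0-0) + (1-1) + (\nu_{\hat{R}} - M(C)) = \nu_{\hat{R}} - M(C),$$
i.e. exactly the flex order appearing in the summand.

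Finally, since $R$ is effective, every local contribution $\sum_i (j_i(\hat{R}) - \epsilon_i)$ is non-negative; in particular the branches outside $\hat{C}_0$ (the singular branches, where $j_1 \ge 2$) contribute non-negatively to $\deg R$. Restricting the identity $\deg R = \sum_{\hat{R}}\sum_i (j_i(\hat{R}) - \epsilon_i)$ to the non-singular branches and discarding the remaining non-negative terms then yields
$$\sum_{\hat{R} \in \hat{C}_0} (\nu_{\hat{R}} - M(C)) \le \deg R = (M(C)+1)(2g-2) + 3d,$$
as claimed.

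I expect the genuine analytic depth to lie entirely in the cited theorem itself, namely that $R$ is a well-defined effective divisor of the stated degree; in positive characteristic this rests on the non-vanishing of the appropriate Wronskian determinant and its interaction with the Frobenius morphism, and I would treat it as a black box. The only elementary verification that remains on my side is the identification of the local order sequence $\{0,1,\nu_{\hat{R}}\}$ at a non-singular branch, which is the main --- though quite modest --- obstacle, since it is simply a matter of unwinding the definitions of non-singular branch, of $\nu_{\hat{R}}$, and of $T_{\hat{R}}C$ recalled in the preliminaries.
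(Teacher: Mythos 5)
Your proposal is correct and is exactly the route the paper intends: the paper states this as a Fact with a bare citation to \cite[Theorem 1.5]{stohr-voloch} and \cite{piene}, and your argument is the standard unwinding of that citation (Wronskian/ramification divisor of degree $(\epsilon_0+\epsilon_1+\epsilon_2)(2g-2)+3d$ with local order sequence $\{0,1,\nu_{\hat{R}}\}$ at a non-singular branch). The only nitpick is that St\"ohr--Voloch gives the local contribution as $v_{\hat{R}}(R) \ge \sum_i (j_i(\hat{R})-\epsilon_i)$ rather than equality, but since you only need the lower bound to conclude, this does not affect the argument.
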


\begin{fact}[Pl\"ucker formula] \label{plucker} 
Let $d^*$ be the degree of the dual curve $C^*$. 
Then
$$ s(\gamma)q(\gamma)d^* \le 2g-2+2d. $$
Furthermore, if $\hat{C}_0=\hat{C}$ (i.e. $r: \hat{C} \rightarrow \Bbb P^2$ is unramified), then the equality holds. 
\end{fact}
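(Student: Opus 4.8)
The plan is to realize $s(\gamma)q(\gamma)d^*$ as a ramification count for the projection from a general external point and to compare it with Riemann--Hurwitz. Choose a general point $Q \in \Bbb P^2$. Since $\hat{\pi}_Q$ is inseparable only when $Q$ is the strange center (of which there is at most one), $\hat{\pi}_Q$ is separable, and by B\'ezout it has degree $d$ with a general fibre consisting of $d$ distinct smooth points. Writing $D_{\hat{R}}$ for the order of the ramification divisor of $\hat{\pi}_Q$ at $\hat{R}$, Riemann--Hurwitz gives
$$ 2g-2 = -2d + \sum_{\hat{R} \in \hat{C}} D_{\hat{R}}, $$
so that $\sum_{\hat{R}} D_{\hat{R}} = 2g-2+2d$. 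The strategy is to show that the non-singular branches account for exactly $s(\gamma)q(\gamma)d^*$ of this sum, while the singular branches contribute a non-negative remainder that vanishes precisely when $\hat{C}_0=\hat{C}$.

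First I would locate the ramification on $\hat{C}_0$. By Lemma \ref{index}(2) a non-singular branch $\hat{R}$ ramifies for $\hat{\pi}_Q$ if and only if $\overline{QR}$ is the tangent line, i.e.\ $Q \in T_{\hat{R}}C$; equivalently $\gamma(\hat{R})$ lies on the line of $\check{\Bbb P}^2$ dual to $Q$. For general $Q$ this line meets $C^*$ transversally in $d^*$ points, and $\gamma$ has $s(\gamma)$ distinct preimages over each, so there are exactly $s(\gamma)d^*$ tangent branches. Choosing $Q$ off the finitely many flex tangents, each such branch is a non-flex, so $\nu_{\hat{R}}=M(C)$ and $e_{\hat{R}}=M(C)$.

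The heart of the matter is the local value of $D_{\hat{R}}$ at such a branch. In tangent-adapted coordinates the branch is $y=x^{M(C)}+(\text{higher order})$ with $x$ a local parameter, and projection from a general point of the tangent is $x\mapsto y$, whence $D_{\hat{R}}={\rm ord}_{\hat{R}}(dy/dx)$. If $q(\gamma)=1$, then $p\ne 2$ and $M(C)=2$, the ramification is tame, and $D_{\hat{R}}=M(C)-1=1=q(\gamma)$. If $q(\gamma)\ge 2$, the theorem of Hefez and Kleiman (\cite{hefez-kleiman}) yields $M(C)=q(\gamma)=p^a$; then the leading term of $dy/dx$ vanishes, while for a general branch the next exponent equals $M(C)+1$, giving $D_{\hat{R}}=M(C)=q(\gamma)$. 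In both cases each tangent branch contributes $q(\gamma)$, so the total over $\hat{C}_0$ is $s(\gamma)d^*\cdot q(\gamma)=s(\gamma)q(\gamma)d^*$.

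It remains to combine the contributions. For general $Q$ each singular branch $\hat{R}$ satisfies $D_{\hat{R}}\ge e_{\hat{R}}-1\ge 1$, since $\overline{QR}$ is then a general line through the singular point, and these branches are disjoint from the tangent branches above. Therefore
$$ s(\gamma)q(\gamma)d^* = \sum_{\hat{R}\in\hat{C}_0} D_{\hat{R}} \le \sum_{\hat{R}\in\hat{C}} D_{\hat{R}} = 2g-2+2d, $$
with equality exactly when there is no singular branch, i.e.\ $\hat{C}_0=\hat{C}$. I expect the wild case $q(\gamma)\ge 2$ to be the main obstacle: proving that $D_{\hat{R}}$ equals $M(C)$ at a general tangent branch requires the precise generic normal form of the branch, and it is here that Hefez--Kleiman's identity $M(C)=q(\gamma)$ together with the order-sequence analysis of \cite{stohr-voloch} is indispensable.
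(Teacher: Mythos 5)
This statement is quoted in the paper as a known result, with references to \cite[Theorem 1.5]{stohr-voloch} and \cite{piene}; the paper gives no proof of its own, so there is nothing internal to compare against. Your argument is the standard derivation of the generalized Pl\"ucker formula: project from a general point $Q$, apply Riemann--Hurwitz to the separable degree-$d$ map $\hat{\pi}_Q$, and split the different $\sum_{\hat{R}}D_{\hat{R}}=2g-2+2d$ into the contribution of the $s(\gamma)d^*$ non-singular tangent branches (each a non-flex with $e_{\hat{R}}=M(C)$) and a strictly positive contribution from each singular branch (where $e_{\hat{R}}\ge 2$ for every line through the point). That skeleton is sound, and the tame case $q(\gamma)=1$, $M(C)=2$, $p\ne 2$ is complete.

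The gap is precisely where you predicted it: the wild local computation. Projection from a general point of the tangent is \emph{not} $x\mapsto y$; that is projection from the one special point at infinity of the tangent line in your adapted coordinates, and for that choice $D_{\hat{R}}={\rm ord}(dy/dx)$ equals $M(C)$ only if $c_{M(C)+1}\ne 0$ at a general branch --- an assertion that does not follow from the order sequence $\{0,1,M(C)\}$ and that you leave unproved. (For the inequality alone this is harmless, since wild ramification gives $D_{\hat{R}}\ge e_{\hat{R}}=M(C)=q(\gamma)$ and hence $s(\gamma)q(\gamma)d^*\le\sum_{\hat{R}}D_{\hat{R}}$; but the equality statement needs $D_{\hat{R}}=q(\gamma)$ exactly.) The repair: take $Q=(a,0)$ a truly general point of the tangent $y=0$, so that $\hat{\pi}_Q$ is $x\mapsto y/(x-a)$ and $D_{\hat{R}}={\rm ord}_x\bigl(y'(x)(x-a)-y(x)\bigr)$. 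Writing $y=\sum_{i\ge M(C)}c_ix^i$, the term $xy'-y=\sum(i-1)c_ix^i$ has order exactly $M(C)$ because its leading coefficient is $(M(C)-1)c_{M(C)}=-c_{M(C)}\ne 0$ (as $p\mid M(C)$), while ${\rm ord}(ay')\ge M(C)$; for general $a$ the $x^{M(C)}$-coefficients do not cancel, so $D_{\hat{R}}=M(C)=q(\gamma)$ unconditionally. One should also record the routine incidence-variety remark that for $Q$ general in $\Bbb P^2$, each tangent branch through $Q$ is a general point of $\hat{C}$ and $Q$ is a general point of that tangent line, which legitimizes both this computation and your use of Hefez--Kleiman at a generic branch. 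With that correction the proof is complete.
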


To settle curves attaining our bound, we also use an important {\it theorem of Kaji} \cite{kaji, kleiman2} on (non-reflexive) curves, which asserts that the geometric genera of $C$ and $C^*$ are the same. 

On a Galois covering of curves, the following holds in general (see \cite[III. 7.2]{stichtenoth}). 

\begin{fact} \label{Galois covering} 
Let $\theta: C \rightarrow C'$ be a Galois covering of degree $d$. 
We denote by $e_P$ the ramification index at a point $P \in C$. 
Then we have the following. 
\begin{itemize}    
\item[(1)] If $\theta(P)=\theta(Q)$, then $e_P=e_Q$. 
\item[(2)] The index $e_P$ divides the degree $d$. 
\end{itemize} 
\end{fact}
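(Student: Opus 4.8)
The plan is to translate the geometric statement into the arithmetic of the Galois function-field extension $K(C)/\theta^*K(C')$, whose Galois group $G:=\mathrm{Gal}(K(C)/\theta^*K(C'))$ has order $d$ and acts on $C$ by automorphisms commuting with $\theta$. Each point $P \in C$ corresponds to a place of $K(C)$ lying over the place of $K(C')$ attached to $\theta(P)$, and $G$ permutes the fiber $\theta^{-1}(\theta(P))$. With this dictionary in place, both assertions reduce to standard local theory of Galois extensions of discretely valued fields.

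For part (1), I would first invoke the transitivity of the $G$-action on a fiber: for $P, Q$ with $\theta(P)=\theta(Q)$, the places of $K(C)$ over the corresponding place of $K(C')$ form a single $G$-orbit, so there exists $\sigma \in G$ with $\sigma(P)=Q$. Since $\sigma$ is an automorphism of $C$ satisfying $\theta \circ \sigma=\theta$, it carries the local ring at $P$ isomorphically onto that at $Q$ compatibly with $\theta$, hence preserves the ramification index; therefore $e_P=e_Q$.

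For part (2), I would compute the order of the stabilizer $D_P=\{\sigma \in G : \sigma(P)=P\}$, i.e. the decomposition group of the corresponding place. For a Galois extension one has $|D_P|=e_P f_P$, where $f_P$ is the residue degree; because $K$ is algebraically closed every residue field equals $K$, so $f_P=1$ and $|D_P|=e_P$. Lagrange's theorem applied to $D_P \le G$ then yields $e_P \mid |G|=d$. Alternatively, one may combine part (1) with the fundamental identity $\sum_{\theta(P)=P'} e_P=d$, valid over an algebraically closed field since all residue degrees are $1$: if $e$ denotes the common ramification index over $P'$ and the fiber has $m$ points, then $me=d$, whence $e \mid d$.

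The only point requiring care is the identity $|D_P|=e_P$, namely that the decomposition group coincides with the inertia group and has order exactly the ramification index. This is precisely where algebraic closedness of $K$ is essential: it forces the residue extension to be trivial, so that no residue degree intervenes and the relation $|D_P|=e_P$ holds uniformly in the tame and wild cases alike. Granting this standard input, both assertions follow at once.
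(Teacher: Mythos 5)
Your proof is correct and is essentially the paper's own approach: the paper states this as a Fact without proof, citing Stichtenoth III.7.2, and your argument---transitivity of the Galois action on the fiber (so an automorphism commuting with $\theta$ carries $P$ to $Q$ and preserves ramification indices) for (1), and the orbit--stabilizer/decomposition-group count $|D_P|=e_Pf_P=e_P$ over the algebraically closed constant field followed by Lagrange for (2)---is precisely the standard proof behind that cited result. No gaps to report.
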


We mention properties of Galois covering between rational curves. 
The following fact is a corollary of the classification of finite subgroups of ${\rm PGL}(2, K)$ (see, for example, \cite[Theorem 11.91]{hkt}). 

\begin{fact} \label{rational Galois covering} 
Let $\theta: \Bbb P^1 \rightarrow \Bbb P^1$ be a Galois covering of degree $d \ge 3$ and let $d=p^ek$, where $k$ is not divisible by $p$. 
If $p=0$, we set $e=0$. 
We have the following. 
\begin{itemize}
\item[(1)] Assume that $p>0$. If the index $e_P$ is equal to $p^a \ge 3$ for some $a>0$ at a point $P$, then we have $e \ge 1$. 
Further, if $d > p^a$, then $a=e$ and there exists a point $Q$ such that $e_Q \ge 2$ and $e_Q$ divides $k$. 
\item[(2)] If $e=0$ and $e_P=d$ for some $P$, then there exist exactly two points with index $d$. 
\item[(3)] If $e \ge 1$, $k \ge 2$ and $e_P=d$ for some $P$, then there exist exactly $p^e$ points with index $k$ in the same fiber.  
\item[(4)] Let $P \in \Bbb P^1$. 
If $\theta^{-1}(\theta(P))$ contains two or more points, then there exist two different points $P', P''$ such that $P', P''\not\in\theta^{-1}(\theta(P))$ and $e_{P'}, e_{P''} \ge 2$. 
Furthermore, if all such pairs $P', P''$ satisfies $e_{P'}=e_{P''}=2$, then the Galois group is the dihedral group and $\theta^{-1}(\theta(P))$ consists of exactly two points. 
\item[(5)] Assume that $p=2$. 
If there exists no point $P$ with $e_P \ge 4$, then there exist two different points $P'$, $P''$ such that $e_{P'} \ge 3$, $e_{P''} \ge 3$. 
\end{itemize} 
\end{fact}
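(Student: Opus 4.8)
The plan is to translate the covering into group theory and then invoke the classification of finite subgroups of $\mathrm{PGL}(2,K)$. A Galois covering $\theta:\Bbb P^1\to\Bbb P^1$ of degree $d$ is, up to isomorphism, the quotient map $\Bbb P^1\to\Bbb P^1/G$ for a finite subgroup $G\subset\mathrm{PGL}(2,K)=\mathrm{Aut}(\Bbb P^1)$ with $|G|=d$; under this identification the fiber $\theta^{-1}(\theta(P))$ is the $G$-orbit of $P$ and the ramification index $e_P$ equals the order of the stabilizer $G_P$ (which already recovers Fact~\ref{Galois covering}, since orbits are transitive and stabilizers in one fiber are conjugate). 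Every assertion thereby becomes a statement about orders of point stabilizers, and I would reduce matters to the elementary structure of $\mathrm{PGL}(2,K)$ in characteristic $p$: a nontrivial element is either semisimple, of order prime to $p$ and fixing exactly two points, or unipotent, of order exactly $p$ and fixing a single point. Consequently each stabilizer has the shape $G_P=U_P\rtimes C_P$ with $U_P$ an elementary abelian $p$-group of unipotents through $P$ and $C_P$ cyclic of order prime to $p$.

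With this dictionary, part (1) is almost immediate: from $e_P=p^a$ and $p^a\mid d=p^ek$ with $\gcd(p,k)=1$ one gets $a\le e$, hence $e\ge a\ge 1$. For the refinement I would note that a nontrivial $p$-subgroup $G_P$ consists of unipotents sharing the fixed point $P$; since the centralizer of a unipotent element is the full translation group fixing its point, any Sylow $p$-subgroup containing $G_P$ also fixes $P$, so $p^e\mid e_P=p^a$ and therefore $a=e$. Then $d>p^a=p^e$ forces $k\ge 2$, and I would produce $Q$ with $e_Q\ge 2$ and $e_Q\mid k$ by exhibiting a semisimple element of prime order dividing $k$ whose two fixed points are fixed by no unipotent, namely a generator of a non-split torus in the $\mathrm{PSL}(2,q)$ or $\mathrm{PGL}(2,q)$ cases, or the cyclic complement in a Borel-type semidirect product. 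Parts (2) and (3) treat the totally ramified case $e_P=d$: here $G$ fixes $P$, so $G$ lies in the Borel $\Bbb G_a\rtimes\Bbb G_m$ of $\mathrm{Stab}(P)$. If $e=0$ the $p$-part is trivial, $G$ embeds in $\Bbb G_m$ and is cyclic with exactly two fixed points, giving (2); if $e\ge1$ and $k\ge2$ then $G\cong(\Bbb Z/p)^e\rtimes\Bbb Z/k$, and a direct orbit computation shows the orbit of the torus fixed point has exactly $p^e$ points, each with stabilizer of order $k$, giving (3).

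For parts (4) and (5) I would run through the classification of Dickson (see \cite[Theorem 11.91]{hkt}): a finite subgroup of $\mathrm{PGL}(2,K)$ is cyclic of order prime to $p$, elementary abelian, dihedral of order $2n$ with $\gcd(n,p)=1$, one of $A_4,S_4,A_5$, a Borel type $(\Bbb Z/p)^t\rtimes\Bbb Z/n$ with $n\mid p^t-1$, or $\mathrm{PSL}(2,q)$ or $\mathrm{PGL}(2,q)$. In each case I would list the stabilizer orders that occur, read off the conclusions, and in particular show that the extremal hypothesis of (4) forces the signature $(2,2,n)$, so that $G$ is dihedral, the given fiber lies over the index-$n$ branch point, and hence consists of exactly two points; for (5) with $p=2$, the absence of a point with $e_P\ge4$ bounds the Sylow $2$-subgroup by $2$, so $G$ is of odd order (cyclic, with two totally ramified points of index $d\ge3$) or dihedral of order $2m$ with $m$ odd $\ge3$ (whose two ``rotation'' points have index $m\ge3$). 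The main obstacle I anticipate is precisely the positive-characteristic groups carrying a normal $p$-subgroup, that is the elementary abelian and Borel-type groups: there a single wildly ramified point can absorb almost all of the ramification, so the count in (4) is delicate and must be reconciled with the standing hypotheses under which $\theta$ arises; alongside this, the groups of Lie type demand the careful separation of split from non-split tori that drives the refinement in (1) and the exact orbit count in (3).
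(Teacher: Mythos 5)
Your overall strategy is exactly the one the paper intends: the paper gives no argument for this Fact beyond the remark that it is ``a corollary of the classification of finite subgroups of ${\rm PGL}(2,K)$'' (\cite[Theorem 11.91]{hkt}), and your dictionary --- $\theta$ is the quotient by a finite $G\subset{\rm PGL}(2,K)$, fibers are orbits, $e_P=|G_P|$, and $G_P$ is (elementary abelian $p$-group)$\rtimes$(cyclic prime to $p$) --- is the right way to carry that out. Your treatments of (2), (3) and (5) are correct, and in (1) the key observation that a finite $p$-subgroup of ${\rm PGL}(2,K)$ has all nontrivial elements unipotent with a \emph{common} fixed point (so a Sylow $p$-subgroup containing $G_P$ must fix $P$, forcing $a=e$) is exactly what is needed; the existence of $Q$ in (1) then follows by inspecting the few groups that actually admit a point with $p$-group stabilizer of order $\ge 3$ and $d>p^a$ (e.g.\ $A_4$ in characteristic $3$, where the normal $p$-complement supplies $Q$ with $e_Q=2$).

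The genuine gap is in (4), precisely at the place you flag as an ``anticipated obstacle'' and then do not close. For $G=(\Bbb Z/p)^e$ acting by translations, the quotient map is wildly and totally ramified at the single point $\infty$ and unramified elsewhere (Riemann--Hurwitz: $\deg R=2p^e-2$ is absorbed entirely at $\infty$); so if $P$ lies in a free orbit, the fiber of $P$ has $p^e\ge 2$ points but there is only \emph{one} point outside it with index $\ge 2$, not two. The same happens for a Borel-type group $(\Bbb Z/p)^t\rtimes\Bbb Z/n$ when $P$ is taken in the orbit of the torus-fixed point. Thus the signature argument $(2,2,n)$ you propose cannot be run uniformly: either these normal-$p$-subgroup cases must be excluded by hypotheses coming from the application (in Section 5 the covering $\hat{\pi}_P$ has an unramified fiber through the Galois point together with a partially ramified fiber over $\overline{PQ}$, which rules such groups out), or the statement of (4) must be read with that restriction. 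As written, your proof of (4) is incomplete, and a verification that completes it must explicitly dispose of the elementary abelian and Borel-type subgroups rather than defer them. The second half of (4) (the extremal case forcing the dihedral group and a two-point fiber) does follow from the list once those cases are removed, since every other group with a point of index $\ge 3$ places such a point outside the given fiber.
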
 

\section{Preliminary propositions on  Galois points} 
In \cite[Lemma 2.5]{fukasawa3}, the present author proved the following. 

\begin{lemma}[\cite{fukasawa3}] \label{twoGalois} 
Let $P_1, P_2 \in C_{\rm sm}$ be two distinct Galois points and let $h$ be a defining polynomial of the line $\overline{P_1P_2}$. 
Then, ${\rm ord}_{\hat{R}} r^*h=1$ for any $\hat{R} \in \hat{C}$ with $R=r(\hat{R}) \in \overline{P_1P_2}$ (maybe $R=P_1$ or $P_2$). 
\end{lemma}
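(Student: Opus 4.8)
The plan is to study the two Galois projections $\hat{\pi}_{P_1}$ and $\hat{\pi}_{P_2}$ simultaneously, comparing their ramification along the fibre over the point of $\Bbb P^1$ corresponding to the line $\ell:=\overline{P_1P_2}$. Write $e_1$ (resp. $e_2$) for the ramification index of $\hat{\pi}_{P_1}$ (resp. $\hat{\pi}_{P_2}$) along that fibre. Since $P_1,P_2\in C_{\rm sm}$, both projections are Galois coverings of degree $d-1$, so Fact \ref{Galois covering} guarantees that $e_1,e_2$ are well defined (all branches in a fibre share one index) and that $e_1\mid (d-1)$ and $e_2\mid (d-1)$. The bridge to the statement is Lemma \ref{index}(2): for any branch $\hat{R}$ over a point $R\in\ell$ with $R\neq P_1$, the line $\overline{RP_1}$ equals $\ell$, so the index of $\hat{\pi}_{P_1}$ at $\hat{R}$ is exactly ${\rm ord}_{\hat{R}}r^*h$; symmetrically for $P_2$ whenever $R\neq P_2$. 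Hence ${\rm ord}_{\hat{R}}r^*h=e_1$ for every branch over $\ell\setminus\{P_1\}$ and ${\rm ord}_{\hat{R}}r^*h=e_2$ for every branch over $\ell\setminus\{P_2\}$.

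First I would dispose of the degenerate possibility $\ell\cap C=\{P_1,P_2\}$, which is where the only real subtlety lies. At the centre $P_1$ the branch $\hat{P}_1$ enters the fibre of $\hat{\pi}_{P_1}$ only when $\ell=T_{P_1}C$, and then with index $I_{P_1}(C,\ell)-1$ rather than $I_{P_1}(C,\ell)$, by Lemma \ref{index}(1); this index drop at the centre is the delicate point of the whole argument. Setting $a=I_{P_1}(C,\ell)$, $b=I_{P_2}(C,\ell)$ with $a+b=d$ by B\'ezout, and reading off the indices appearing in both fibres, the equalities forced by Fact \ref{Galois covering}(1) turn out to be numerically inconsistent: the tangency subcases force either $d=3$ or a contradiction such as $-2=0$, both impossible since $d\ge 4$. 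Therefore $\ell$ meets $C$ at some further point $R_0\neq P_1,P_2$.

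With such an $R_0$ in hand, every branch $\hat{R}_0$ over it lies over $\ell\setminus\{P_1\}$ and over $\ell\setminus\{P_2\}$, so ${\rm ord}_{\hat{R}_0}r^*h$ equals both $e_1$ and $e_2$; thus $e_1=e_2=:e$. The two descriptions from the first paragraph then jointly cover every branch over $\ell$ — a branch over $P_1$ is caught by the $P_2$-projection and a branch over $P_2$ by the $P_1$-projection — so ${\rm ord}_{\hat{R}}r^*h=e$ for all $\hat{R}$ with $r(\hat{R})\in\ell$. Finally, since $h$ is linear, $\sum_{r(\hat{R})\in\ell}{\rm ord}_{\hat{R}}r^*h=\deg r^*\mathcal{O}_{\Bbb P^2}(1)=d$, whence $e\mid d$; together with $e\mid (d-1)$ this yields $e=\gcd(d,d-1)=1$, which is the assertion. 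The main obstacle is the bookkeeping in the degenerate case, precisely because of the index drop at the centre of projection; once that case is excluded, the proof reduces to the clean coprimality observation that $e$ divides both $d$ and $d-1$.
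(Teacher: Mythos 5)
The paper does not reprove this lemma---it is quoted verbatim from \cite[Lemma~2.5]{fukasawa3}---so there is no internal proof to compare against, but your argument is correct and is essentially the standard one: the two Galois projections force a common ramification index $e$ along $\overline{P_1P_2}$ with $e \mid (d-1)$ by Fact~\ref{Galois covering}, while $\sum_{r(\hat{R})\in\ell}{\rm ord}_{\hat{R}}r^*h=d$ forces $e\mid d$, whence $e=1$. Your separate elimination of the degenerate case $\overline{P_1P_2}\cap C=\{P_1,P_2\}$ (where the index drop $I_{P}(C,T_PC)-1$ at the centre makes the two fibre computations incompatible, yielding $d=3$ or $-2=0$) is complete and correctly identified as the only delicate point.
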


We denote by $\Delta \subset \hat{C}$ the set of all points $\hat{P} \in \hat{C}$ such that $r(\hat{P}) \in C$ is smooth and Galois for $C$.  
If $M(C)$ is large, we have the following. 

\begin{proposition} \label{high order}
Assume that $(d+2)/2 \le M(C) \le d-1$. 
Then, $\Delta \subset F(\hat{C})$ and $\delta(C)(d-M(C)) \le (M(C)+1)(2g-2)+3d$. 
In particular, if $g=0$, then $\delta(C) \le d$. 
\end{proposition}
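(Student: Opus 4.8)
The plan is to prove the two claims in Proposition~\ref{high order} in order. The key structural input is Lemma~\ref{twoGalois}: for any two distinct smooth Galois points $P_1,P_2$, the line $\overline{P_1P_2}$ meets $\hat C$ transversally at every preimage, i.e.\ ${\rm ord}_{\hat R}r^*h=1$ along that line.

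First I would show $\Delta \subset F(\hat C)$. Fix a Galois point $\hat P \in \Delta$ with $P=r(\hat P)$ smooth, and consider the projection $\hat\pi_P:\hat C \to \Bbb P^1$, which is a Galois covering of degree $d-e_P = d-(M(C)-1) \le d-1$ by Lemma~\ref{index}(1) applied to the general contact order (note $e_P = I_P(C,T_PC)-1 \le M(C)-1$). I want to argue that $\hat P$ must actually be a flex, i.e.\ $\nu_{\hat P} > M(C)$. The hypothesis $(d+2)/2 \le M(C)$ is what forces this: if $P$ were \emph{not} a flex, then $e_P = M(C)-1$ would be the generic value, and I expect to derive a contradiction by counting how the fiber of $\hat\pi_P$ over the tangent direction must split, using Fact~\ref{Galois covering}(2) (the index $e_P$ divides $\deg\hat\pi_P = d-e_P$) together with the lower bound $M(C) \ge (d+2)/2$, which makes $e_P=M(C)-1$ too large to divide $d-M(C)+1$ unless $P$ is a flex. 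Concretely, $e_P \mid (d-e_P)$ forces $e_P \mid d$, and with $e_P = M(C)-1 \ge d/2$ this is impossible for $4 \le d$ in the relevant range, so $P$ must be a flex and $\hat P \in F(\hat C) \subset \hat C_0$.

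For the inequality, once $\Delta \subset F(\hat C)$ I estimate the flex contribution. For each $\hat P \in \Delta$ I claim $\nu_{\hat P} - M(C) \ge d - M(C)$, equivalently $\nu_{\hat P} \ge d$. This should follow because the tangent line $T_{\hat P}C$ at a Galois point, together with the Galois condition, forces a high order of contact: by Fact~\ref{Galois covering}(1) all points in a fiber of $\hat\pi_P$ share the ramification index, and the fiber corresponding to the tangent direction consists of $P$ (with index $e_P = \nu_{\hat P}-1$) alone or of equal-index points, so $e_P = \nu_{\hat P}-1$ divides $d - e_P$; pushing the divisibility and the transversality from Lemma~\ref{twoGalois} gives $\nu_{\hat P} \ge d$. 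Summing over the $\delta(C)$ points of $\Delta$,
$$ \delta(C)(d - M(C)) \le \sum_{\hat P \in \Delta}(\nu_{\hat P} - M(C)) \le \sum_{\hat R \in \hat C_0}(\nu_{\hat R} - M(C)) \le (M(C)+1)(2g-2)+3d, $$
where the last inequality is exactly Fact~\ref{flexes}. The final assertion is immediate: if $g=0$, the right-hand side becomes $-(M(C)+1)\cdot 2 + 3d = 3d - 2M(C) - 2$, and dividing by $d - M(C) > 0$ (since $M(C) \le d-1$) yields $\delta(C) \le (3d - 2M(C)-2)/(d-M(C))$, which a short estimate bounds by $d$ in the range $(d+2)/2 \le M(C) \le d-1$.

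The main obstacle I anticipate is the divisibility/flex step, namely pinning down the precise lower bound $\nu_{\hat P} \ge d$ at each Galois point rather than merely $\nu_{\hat P} > M(C)$. This requires carefully analyzing the fiber structure of the Galois covering $\hat\pi_P$ over the point $\pi_P(T_{\hat P}C)$, combining Fact~\ref{Galois covering}(1)--(2) with the transversality of secant lines from Lemma~\ref{twoGalois}; the interplay between the generic order $M(C)$, the local index $e_P$, and the global degree $d$ is delicate, and it is exactly here that the hypothesis $M(C) \ge (d+2)/2$ must be used to rule out intermediate ramification behaviour. Everything else is bookkeeping built on Facts~\ref{flexes}, \ref{Galois covering} and Lemma~\ref{twoGalois}.
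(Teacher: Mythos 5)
Your overall architecture (show each smooth Galois point is a total flex with $I_P(C,T_PC)=d$, then feed $\nu_{\hat P}-M(C)=d-M(C)$ into Fact~\ref{flexes}) is the right one and matches the paper, but the step that carries all the weight --- $\nu_{\hat P}= d$ --- is exactly the step you leave open, and the divisibility mechanism you propose for it rests on a wrong degree. The projection $\hat\pi_P$ from a smooth point $P$ of a degree-$d$ curve has degree $d-1$, not $d-e_P$; so Fact~\ref{Galois covering}(2) gives $e_P \mid d-1$, not $e_P\mid d$. With your (incorrect) conclusion $e_P\mid d$ the argument does not close: $e_P=M(C)-1=d/2$ divides $d$ and is perfectly consistent with the boundary case $M(C)=(d+2)/2$, $d$ even, so ``this is impossible for $4\le d$'' is false. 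Moreover, Lemma~\ref{twoGalois} concerns lines through \emph{two} Galois points and contributes nothing to the local analysis of a single tangent line, so invoking it for ``$\nu_{\hat P}\ge d$'' is a red herring. As written you establish neither $\Delta\subset F(\hat C)$ nor the quantitative input $\nu_{\hat P}\ge d$ needed for the displayed inequality, and you acknowledge the latter as ``the main obstacle''; that is the gap.

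There are two ways to close it. The paper's way: if $Q\in C\cap T_PC$ with $Q\ne P$, then $\hat P$ and every branch over $Q$ lie in the same fiber of the Galois covering $\hat\pi_P$, so by Fact~\ref{Galois covering}(1) each such branch has index equal to $e_P=I_P(C,T_PC)-1\ge M(C)-1$; by Lemma~\ref{index}(2) and B\'ezout this gives $d\ge I_P(C,T_PC)+I_Q(C,T_PC)\ge M(C)+(M(C)-1)\ge d+1$, a contradiction, whence $C\cap T_PC=\{P\}$ and $\nu_{\hat P}=d$. Alternatively, your divisibility idea does work once the degree is corrected: $e_P=I_P(C,T_PC)-1\ge M(C)-1\ge d/2>(d-1)/2$ together with $e_P\mid d-1$ forces $e_P=d-1$, i.e.\ $I_P(C,T_PC)=d$ in one stroke. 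Either repair yields $\nu_{\hat P}-M(C)=d-M(C)$ for every $\hat P\in\Delta$, after which Fact~\ref{flexes} gives the stated bound and the identity $d(d-M(C))-\{(M(C)+1)(-2)+3d\}=(d-2)(d-1-M(C))\ge 0$ (which you should actually verify rather than call ``a short estimate'') finishes the $g=0$ claim.
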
 

\begin{proof}
Let $\hat{P} \in \Delta(C)$ and let $P=r(\hat{P}) \in C_{\rm sm}$. 
We prove that $C \cap T_PC=\{P\}$. 
Assume by contradiction that $Q \in C \cap T_PC$ and $Q \ne P$. 
It follows from Lemma \ref{index} and Fact \ref{Galois covering}(1) that 
$$ d \ge M(C)+(M(C)-1) \ge d+1. $$
This is a contradiction. 
Therefore, $I_P(C, T_PC)=d$ and $\hat{P} \in F(\hat{C})$. 
It follows from Fact \ref{flexes} that 
$$ \delta(C)(d-M(C)) \le (M(C)+1)(2g-2)+3d. $$
Let $g=0$. 
Since 
\begin{eqnarray*}
& &d(d-M(C))-\{(M(C)+1)(-2)+3d\} \\
&=& d^2-3d+2+M(C)(2-d) \\
&=& (d-2)\{(d-1)-M(C)\} \ge 0, 
\end{eqnarray*}
we have $\delta(C) \le d$. 
\end{proof} 

In a very special case, we have the following proposition needed later. 

\begin{proposition} \label{cusp}
Assume that $M(C)=2$, $Q \in C$ is a (unique) singular point with multiplicity $d-1$ and $r^{-1}(Q)=\{\hat{Q}\}$.  
Then, $\delta(C) \le d-2$. 
\end{proposition}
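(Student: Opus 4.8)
The plan is to reduce first to the rational case and then to show that \emph{every} smooth Galois point is a total inflection, after which a single application of the flex count finishes the bound. Since $Q$ has multiplicity $d-1$, the projection $\hat{\pi}_Q$ has degree $d-(d-1)=1$, so $\hat{\pi}_Q$ is birational, $\hat{C}\cong\Bbb P^1$ and $g=0$. Fix a smooth Galois point $P$ and write $\ell=\overline{QP}$ with defining polynomial $h$. Because $r^{-1}(Q)=\{\hat{Q}\}$ is a single branch, Lemma \ref{index}(2) gives $e_{\hat{Q}}=\mathrm{ord}_{\hat{Q}}r^{*}h=I_Q(C,\ell)\ge d-1$; as $e_{\hat{Q}}\le\deg\hat{\pi}_P=d-1$, we get $e_{\hat{Q}}=d-1$, so $\hat{Q}$ is totally ramified in $\hat{\pi}_P$. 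The same computation shows that no line meets the branch at $\hat{Q}$ with order $1$, so $\hat{Q}\notin\hat{C}_0$ and $\hat{Q}$ contributes nothing to the flex count.

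The key step is to locate a \emph{second} totally ramified point. Here I would use that the Galois group $G_P$ of $\hat{\pi}_P$ is realized by linear transformations of $\Bbb P^2$ fixing $P$ and preserving $C$; hence $G_P$ fixes the unique singular point $Q$, and therefore fixes both $\hat{P}$ and $\hat{Q}$ in $\hat{C}\cong\Bbb P^1$. A finite subgroup of $\mathrm{PGL}(2,K)$ fixing two distinct points is cyclic of order prime to $p$, and its two fixed points are precisely its totally ramified points (cf. Fact \ref{Galois covering} and Fact \ref{rational Galois covering}). Thus $|G_P|=d-1$ is prime to $p$, the cover $\hat{\pi}_P$ is the cyclic cover ramified only over the images of $\hat{Q}$ and $\hat{P}$, and in particular $e_{\hat{P}}=d-1$. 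By Lemma \ref{index}(1) this yields $I_P(C,T_PC)=e_{\hat{P}}+1=d$, i.e. every smooth Galois point is a total inflection with $\nu_{\hat{P}}=d$.

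To finish I would invoke Fact \ref{flexes}. Each smooth Galois point $P$ lies in $\hat{C}_0$ and contributes $\nu_{\hat{P}}-M(C)=d-2$ to the flex sum, so
$$\delta(C)(d-2)\le\sum_{\hat{R}\in\hat{C}_0}(\nu_{\hat{R}}-M(C))\le(M(C)+1)(2g-2)+3d=3d-6=3(d-2),$$
giving $\delta(C)\le 3$. For $d\ge 5$ this already yields $\delta(C)\le d-2$. The case $d=4$ is settled by a short orbit argument: the cyclic group $G_P$ of order $d-1\ge 3$ permutes the set of smooth Galois points (projective invariance) and acts on $\hat{C}$ freely off its two fixed points $\hat{P},\hat{Q}$, so a second smooth Galois point would generate a full orbit of $d-1$ of them, forcing $\delta(C)\ge d>3$ and contradicting $\delta(C)\le 3$. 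Hence $\delta(C)\le d-2$ for all $d\ge 4$.

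The main obstacle is the total-inflection step of the second paragraph: one must justify that $G_P$ is linear (so that it fixes $Q$ and both $\hat{P},\hat{Q}$) and must rule out wild behaviour so that ``fixes two points'' upgrades to ``cyclic with two totally ramified points.'' The clean point that makes this go through is that a finite subgroup of $\mathrm{PGL}(2,K)$ fixing two \emph{distinct} points of $\Bbb P^1$ is automatically tame and cyclic, which is exactly the situation forced by $\hat{P}\ne\hat{Q}$; once this is in hand, the flex count is immediate.
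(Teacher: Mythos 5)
Your first paragraph is fine and matches the paper's setup: $\hat{C}\cong\Bbb P^1$, $g=0$, and $e_{\hat{Q}}=d-1$ for every projection $\hat{\pi}_P$ from a smooth Galois point, so the fiber through $\hat{Q}$ is the single point $\hat{Q}$ and $G_P$ fixes $\hat{Q}$. The gap is in your second paragraph. You assert that $G_P$ is realized by linear transformations of $\Bbb P^2$ preserving $C$, hence fixes $Q$ \emph{and} $P$, hence fixes the two distinct points $\hat{Q},\hat{P}$ of $\Bbb P^1$ and is therefore cyclic of order prime to $p$ with $\hat{P}$ totally ramified. Linearity of the Galois group is a special feature of \emph{smooth} plane curves (where every automorphism is projective); for a singular curve such as this one it is unjustified and in general false. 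Without it, all you know is that $G_P\subset\mathrm{PGL}(2,K)$ has order $d-1$ and fixes the single point $\hat{Q}$; writing $d-1=p^ek$ with $p\nmid k$, such a group is an extension of a cyclic group of order $k$ by an elementary abelian $p$-group of order $p^e$, and when $p\mid d-1$ it is neither cyclic nor tame and need not fix any second point. So your central claim ``every smooth Galois point is a total inflection'' is unproved, and your flex count $\delta(C)(d-2)\le 3d-6$ collapses. The $d=4$ orbit argument has the same defect, since it again needs $G_P$ to permute the set of Galois points, i.e., to act by projective transformations.

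Concretely, the cases your argument silently excludes are exactly the hard ones the paper must treat separately. Even in the tame case $e=0$, Fact \ref{rational Galois covering}(2) only provides \emph{some} second totally ramified point, which may be a point $\hat{P}'$ with $r(\hat{P}')=P'\ne P$ lying on a line through $P$ tangent at $P'$ --- not $\hat{P}$ itself; the paper works with this weaker conclusion and still gets one distinct flex per Galois point (distinctness via Lemma \ref{twoGalois}), which suffices against the refined flex bound $d-2$. For $k=2$ the flex count is not enough and the paper instead counts singular points of the dual curve, after first proving $d^*\le d$ by an explicit computation of the dual map. For $k=1$ (i.e.\ $d-1=p^e$, where $G_P$ is an elementary abelian $p$-group acting with $\hat{Q}$ as its only fixed point) the paper invokes a separate argument from \cite{fukasawa3} giving $\delta(C)=1$. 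To repair your proof you would need either a justification that $G_P$ is linear in this specific geometric situation, or a genuine case division along $d-1=p^ek$ as in the paper.
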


\begin{proof} 
First we prove that $d^* \le d$. 
We may assume that $Q=(0:0:1)$ and $C$ is defined by $X^{d-1}Z-A_d(X, Y)$, where $A_d$ is a homogeneous polynomial of degree $d$ in variables $X, Y$. 
Let $y=Y/X$, $z=Z/Y$ and let $a_d=A_d(1, y)$. 
Note that $\frac{dz}{dy}=a_d'$
and $z-y\frac{dz}{dy}=a_d-y a_d'$. 
The dual map is represented by the matrix 
$$ \left(\begin{array}{ccc} 
1 & y & z \\
0 & 1 & \frac{dz}{dy} 
\end{array}\right) \sim
\left(\begin{array}{ccc}
1 & 0 & z-y\frac{dz}{dy} \\
0 & 1 & \frac{dz}{dy}
\end{array}\right)
$$
in general. 
Since the degrees of polynomials $z-y\frac{dz}{dy}$ and $\frac{dz}{dy}$ are at most $d$, we have $d^* \le d$. 

We remark on the number of flexes. 
Since ${\rm ord}_{\hat{Q}}r^*h \ge d-1$ for any line $\ell \ni Q$ defined by a linear polynomial $h=0$, by \cite[Theorem 1.5]{stohr-voloch}, we have 
\begin{eqnarray*}
\sum_{R \in C \setminus \{Q\}} (I_R(C, T_RC)-2) & \le & (2+1)(-2)+3d-\{(d-1-1)+(d-2)\} \\
&=& d-2.   
\end{eqnarray*}

Let $d-1=p^ek$, where $k$ is not divisible by $p$. 
Assume that $k \ge 3$. 
Note that $e_{\hat{Q}}=d-1$ for the projection $\hat{\pi}_P$ from any point $P \in r(\Delta)$. 
It follows from Fact \ref{rational Galois covering}(2)(3) and Lemma \ref{index} that for any $P \in r(\Delta)$ there exists a point $P' \in C_{\rm sm}$ such that $T_{P'}C \ni P$ and $I_{P'}(C, T_{P'}C)=k$ or $k+1$. 
Note that $P'$ is different for each $P$, by Lemma \ref{twoGalois}. 
Since $P'$ is a flex, $\delta(C) \le d-2$. 

Assume that $k=2$. 
Then, $e \ge 1$ and $p \ge 3$. 
Since $M(C)=2$, $s(\gamma)=1$.  
It follows from Fact \ref{rational Galois covering}(3) and Lemma \ref{index} that for any $P \in r(\Delta)$ there exist $p^e$ points $P' \in C \setminus \{Q\}$ in the same line $\ell \ni P$ such that $I_{P'}(C, \ell)=2$ or $3$, i.e. $\ell=T_{P'}C$. 
Then, $\gamma(P') \in C^*$ is a singular point of $C^*$ with multiplicity at least $p^e=(d-1)/2$, since the set $C \cap T_{P'}C$ contains $(d-1)/2$ points and the tangent lines at such points are equal to $T_{P'}C$. 
Note that $d \ge 3\times 2+1=7$. 
Since $d^* \le d$, by Lemma \ref{twoGalois} and genus formula for $C^*$ (\cite[V. Example 3.9.2]{hartshorne}, \cite[p.135]{hkt}), we have the inequality 
$$ \delta(C) \times \frac{1}{2}\frac{d-1}{2}\left(\frac{d-1}{2}-1\right) \le \frac{(d-1)(d-2)}{2}. $$
Then, we have $\delta(C) \le d-2$. 

Similarly to \cite[p. 116, lines 6--23]{fukasawa3}, if $k=1$, then we have $\delta(C)=1$. 
\end{proof}

If $\delta(C)$ is large enough, we can prove that $C$ is an ``immersed'' curve. 
Precisely, we have the following.

\begin{proposition} \label{unramified} 
Assume that there exists no singular point with multiplicity $d-1$. 
If $2g-2+2d-2 < \delta(C)$, then $\hat{C}_0=\hat{C}$. 
\end{proposition}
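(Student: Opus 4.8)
The plan is to argue by contradiction. Suppose $\hat{C}_0 \neq \hat{C}$ and fix a singular branch $\hat{R} \in \hat{C}\setminus\hat{C}_0$, with $R=r(\hat{R})$. Since every smooth point of $C$ carries a single non-singular branch, $R$ must be a singular point of $C$; by the hypothesis excluding multiplicity $d-1$, its multiplicity $m$ satisfies $2 \le m \le d-2$. By the definition of a singular branch, every line $\ell$ through $R$ meets it with ${\rm ord}_{\hat{R}}r^*h \ge 2$, where $h$ defines $\ell$; I write $m_0 \ge 2$ for the minimal such order, i.e. the multiplicity of the branch.

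Next I would use Lemma \ref{twoGalois} to spread the Galois points around $R$. If $R$ lay on a line joining two distinct Galois points, that lemma would force ${\rm ord}_{\hat{R}}r^*h=1$, contradicting that $\hat{R}$ is singular. Hence no two Galois points are collinear with $R$, so the lines $\ell_i:=\overline{RP_i}$ attached to the $\delta(C)$ Galois points $P_i$ are pairwise distinct and each contains exactly one Galois point. For a fixed $P_i$ with $P_i \notin T_{\hat{R}}C$ (which excludes at most one index), Lemma \ref{index} gives $e_{\hat{R}}=m_0 \ge 2$ for the Galois covering $\hat{\pi}_{P_i}$; then Fact \ref{Galois covering}(1) forces every branch in the fibre $\hat{\pi}_{P_i}^{-1}(\ell_i)$ to have index $m_0$. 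Consequently $\ell_i$ meets $C$ with multiplicity $\ge m_0 \ge 2$ at every point other than $P_i$; in particular $\ell_i$ is a tangent line of $C$.

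To conclude I would pass to the projection $\hat{\pi}_R$ from $R$, which has degree $d-m$. Each $\ell_i$ that is tangent to $C$ at a smooth point $S_i \neq R$ gives $e_{S_i}=I_{S_i}(C,\ell_i)\ge 2$, hence a ramification point of $\hat{\pi}_R$ lying off the centre; since these sit in distinct fibres, Riemann--Hurwitz yields $\delta(C) \le 2g-2+2(d-m) \le 2g-2+2d-4$, which already contradicts $2g-2+2d-2 < \delta(C)$. As an alternative count with less slack, each $\ell_i$ equals $\gamma(S_i)=T_{S_i}C$ and so determines a point of the dual curve $C^*$ on the dual line of $R$, whence $\delta(C) \le d^* \le 2g-2+2d$ by the Pl\"ucker bound (Fact \ref{plucker}); here one would have to recover the extra two from the tangent line(s) of the branch(es) at $R$.

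The hard part will be the step asserting that each Galois line $\ell_i$ registers as an \emph{honest} tangency, i.e. as a ramification point of $\hat{\pi}_R$ away from $R$ (equivalently, a point of $C^*$): a priori $\ell_i$ could meet $C$ only at $R$ and at further singular points, where the high contact need not come from a smooth tangency. I would control this by observing that such exceptional lines are few---each forces $\ell_i$ either to pass through a second singular point or to have contact $d-1$ with a single branch at $R$, and only boundedly many lines through $R$ can do so---so that at most a couple of Galois lines are lost, still leaving $\delta(C) \le 2g-2+2d-2$ by the $\hat{\pi}_R$ estimate, which carries slack $2$. A secondary point to verify is separability: were $\hat{\pi}_R$ inseparable, $C$ would be strange with centre $R$, a degenerate configuration to be dispatched directly, and in the dual-curve variant one must also check that the dual line of $R$ is not a component of $C^*$.
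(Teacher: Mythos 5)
Your overall strategy---assume a singular branch $\hat{R}$ over a singular point $R$ of multiplicity $m\le d-2$, use Lemma \ref{twoGalois} to see that the $\delta(C)$ lines $\ell_i=\overline{RP_i}$ are pairwise distinct, use Fact \ref{Galois covering}(1) for $\hat{\pi}_{P_i}$ to propagate ${\rm ord}_{\hat{R}}r^*h_i\ge 2$ to the whole fibre over $\ell_i$, and then overload the Riemann--Hurwitz bound for $\hat{\pi}_R$---is viable and runs parallel to the paper's proof, which compares $\delta(C)$ with the same quantity $2g-2+2(d-m)+m$ but in the opposite direction (it finds one unramified line of $\hat{\pi}_Q$ through a Galois point avoiding the tangent cone and transfers \emph{non}-ramification into the branches over $Q$). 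However, your resolution of the ``hard part'' is wrong as stated. A line $\ell_i$ through a second singular point is not exceptional at all: by the fibre argument you have already set up, \emph{every} branch over \emph{every} point of $\ell_i\cap C\setminus\{P_i,R\}$, smooth or not, has order $\ge 2$ along $\ell_i$ and hence is a ramification point of $\hat{\pi}_R$ off the centre; it is your restriction to smooth tangencies $S_i$ that manufactures the difficulty, and the claim that only boundedly many lines through $R$ meet a second singular point is false (there may be up to $(d-1)(d-2)/2$ singular points). The genuinely exceptional lines are those with $\ell_i\cap C=\{P_i,R\}$ \emph{and} $I_{P_i}(C,\ell_i)=1$ (if instead $\ell_i=T_{P_i}C$, then $\hat{P}_i$ itself is a ramification point of $\hat{\pi}_R$ of index $I_{P_i}(C,T_{P_i}C)\ge 2$). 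Such a line has $I_R(C,\ell_i)=d-1>m$, so it must be the tangent line of some branch at $R$; hence there are at most $\#r^{-1}(R)\le m-1$ of them, and $\delta(C)\le (2g-2+2(d-m))+(m-1)=2g-2+2d-m-1\le 2g-2+2d-3$ gives the desired contradiction. So the gap is repairable, but not by the route you indicate.

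The second gap is the strange-centre case, which you defer with ``to be dispatched directly.'' This is a genuine missing piece, not a formality: if $\hat{\pi}_R$ is inseparable your entire count collapses, since every point is then a ramification point and Riemann--Hurwitz no longer bounds their number, so the case cannot be absorbed into the main argument. The paper devotes a separate paragraph to it: if $Q$ is a strange centre, every tangent line passes through $Q$, so for a Galois point $P$ the covering $\hat{\pi}_P$ is ramified only over the single line $\overline{PQ}$; by Lemma \ref{index} the index at $\hat{P}$ is $I_P(C,T_PC)-1=p^ek-1$, prime to $p$, so by Fact \ref{Galois covering}(1) the unique ramified fibre is tame, and one tamely ramified branch point contradicts the Riemann--Hurwitz formula. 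You would need to supply an argument of this kind before your main count can be applied.
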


\begin{proof}
Let $Q$ be a singular point with multiplicity $m \le d-2$. 
Note that the number of tangent directions at $Q$ is at most $m$. 
Assume that $Q$ is not a strange center. 
We prove that any point $\hat{R} \in \hat{C}$ with $r(\hat{R})=Q$ is a non-singular branch. 
If there exists a line containing $Q$ and two Galois points, then we have this assertion by Lemma \ref{twoGalois}. 
Therefore, we consider the case where any line containing $Q$ has at most one inner Galois point. 
We consider the projection $\hat{\pi}_{Q}$ from $Q$. 
It follows from Riemann-Hurwitz formula that the number of ramification points is at most $2g-2+2(d-m)$. 
By the assumption, $2g-2+2(d-m)+m \le 2g-2+2(d-2)+2 <\delta(C)$. 
Then, there exists a line $\ell \ni Q$ such that (the point of $\Bbb{P}^1$ corresponding to) $\ell$ is not a branch point of $\hat{\pi}_Q$, the fiber $r^{-1}(C \cap \ell \setminus \{Q\})$ consists of $d-m$ points, and $\ell$ contains a Galois point $P \in C_{\rm sm}$. 
In this case, there exists a point $\hat{R} \in \hat{C}$ with $R=r(\hat{R}) \in \overline{PQ} \setminus \{P, Q\}$ such that ${\rm ord}_{\hat{R}}r^*h=1$, where $h$ is a defining polynomial of the line $\overline{PR}$, by Lemma \ref{index}. 
It follows from Fact \ref{Galois covering}(1) that any point $\hat{R} \in r^{-1}(Q)$ is a non-singular branch.

We prove that $Q$ is not a strange center (under the assumption that any point $\hat{R}$ with $r(\hat{R}) \ne Q$ is a non-singular branch). 
Assume by contradiction that $Q$ is a strange center. 
Then, $p>0$. 
Since the projection $\hat{\pi}_{Q}$ from $Q$ is not separable, $e_P=p^ek$ for some integers $e >0$ and $k>0$ for each $P \in r(\Delta)$. 
By Lemma \ref{index}(2), $I_P(C, T_PC)=p^ek$. 
We consider the projection $\hat{\pi}_P$ from a point $P \in r(\Delta)$. 
By Lemma \ref{index}(1), $e_P=I_P(C, T_PC)-1=p^ek-1$. 
On the other hand, $Q \in T_{\hat{R}}C$ for any point $\hat{R}$ with $r(\hat{R}) \ne Q$, since any point $\hat{R}$ with $r(\hat{R}) \ne Q$ is a non-singular branch by the above discussion.  
Therefore, the projection $\hat{\pi}_P$ is ramified only at points in the line $\overline{PQ}$. 
There exist only tame ramification points for $\hat{\pi}_P$, by Fact \ref{Galois covering}(1). 
By Riemann-Hurwitz formula, this is a contradiction. 
\end{proof}

\section{The case where $M(C) \ge 3$} 
Throughout this section, we assume that $(M(C)+1)(2g-2)+3d \le \delta(C) < \infty$. 
If $M(C)=d$, then the present author showed that $\delta(C)=0$ or $\infty$ in 
\cite{fukasawa2} (Classification of curves with $M(C)=d$ by Homma \cite[Theorem 3.4]{homma1} is crucial). 
Therefore, we have $M(C)<d$. 
We prove that (i) $\hat{C}_0=\hat{C}$ if $g \ge 1$, and (ii) $\Delta \subset F(\hat{C})$.

We consider the case where there exists a singular point $Q$ with multiplicity $d-1$. 
Then, $\hat{C}$ is rational and $Q$ is a unique singular point. 
It follows from B\'ezout's theorem that $Q \not\in T_PC$ for any point $P \in C_{\rm sm}$. 
Since $d>M(C)$, there exists a point $R \in C_{\rm sm} \setminus \{P\}$ with $R \in T_PC$ if $I_P(C, T_PC)=M(C)$. If there exists a point $P \in r(\Delta)$ with $I_P(C, T_PC)=M(C)$, then it follows from Lemma \ref{index} and Fact \ref{Galois covering}(1) that $I_R(C, T_RC)=M(C)-1 \ge 2$. 
This is a contradiction to the order sequence $\{0, 1, M(C)\}$. 
Therefore, $\Delta \subset F(\hat{C})$. 

We consider the case where there exists no singular point with multiplicity $d-1$. 
By Proposition \ref{high order}, if $g=0$ and $M(C) \ge \frac{d+2}{2}$, then $\Delta \subset F(\hat{C})$.  
Assume that $g \ge 1$ or $M(C) < \frac{d+2}{2}$. 
Then, $2g-2+2d-2 < (M(C)+1)(2g-2)+3d \le \delta(C)$. 
It follows from Proposition \ref{unramified} that $\hat{C}_0=\hat{C}$. 
We have assertion (i) above. 

Since $d>M(C)$, if there exists a Galois point $P \in C_{\rm sm}$ such that $I_P(C, T_PC)=M(C)$, then there exists a point $R \in C \cap T_PC$. 
Then, the ramification index $e_{\hat{R}}=M(C)-1 \ge 2$ at $\hat{R}$ for $\hat{\pi}_P$, where $\hat{R} \in \hat{C}$ with $r(\hat{R})=R$. 
By $\hat{C}_0=\hat{C}$ as above, this is a contradiction to the order sequence $\{0, 1, M(C)\}$.  
We have assertion (ii). 
By the assumption $\delta(C) \ge (M(C)+1)(2g-2)+3d$ and Fact \ref{flexes}, $\Delta=F(\hat{C})$ and $\delta(C)=(M(C)+1)(2g-2)+3d$. 

We consider curves with $\Delta=F(\hat{C})$ and $\delta(C)=(M(C)+1)(2g-2)+3d$. 
For each $P \in r(\Delta)$, $I_P(C, T_PC)=M(C)+1$. 
By Lemma \ref{index}(1) and Fact \ref{Galois covering}(2), $M(C)$ divides $d-1$.  
If $g \ge 1$, then $\hat{C}_0=\hat{C}$. 
By Fact \ref{plucker}, $M(C)$ divides $2g-2+2d$. 
Then, $2g \equiv 0$ modulo $M(C)$.  
When $g=1$, $M(C) \ge 3$ does not divide $2g=2$.   
We have $g \ne 1$. 

We consider the case $g \ge 2$. 
Assume that $d-1>M(C)$. 
According to Kaji's theorem \cite[Theorem 4.1, Corollary 4.4]{kaji}, the geometric genus of $C^*$ is equal to that of $C$ and $s(\gamma)=1$. 
For each $P \in r(\Delta)$, $\gamma(P) \in C^*$ is a singular point of $C^*$ with multiplicity at least $(d-1)/M(C)$, since the set $r^{-1}(C \cap T_PC)$ consists of $(d-1)/M(C)$ points and the tangent line at each point of $r^{-1}(C \cap T_PC)$ coincides with $T_PC$. 
We take $M:=M(C)$. 
By genus formula for $C^*$ (\cite[V. Example 3.9.2]{hartshorne}, \cite[p.135]{hkt}) and Fact \ref{plucker}, we have 
\begin{eqnarray*} 
g& \le& \frac{1}{2}\left(\frac{2g-2+2d}{M}-1\right)\left(\frac{2g-2+2d}{M}-2\right) \\ 
& & -((M+1)(2g-2)+3d)\times \frac{1}{2}\frac{d-1}{M}\left(\frac{d-1}{M}-1\right). \end{eqnarray*} 
Then, 
\begin{eqnarray*} 
0& \le& (2g)\{2g+(4(d-1)-3M)-(M+1)(d-1)(d-1-M)-M^2\} \\
& & +(d-1-M)\{4(d-1)-2M-(3d-2(M+1))(d-1)\}.  
\end{eqnarray*}
Since $(d-1-M)\{4(d-1)-2M-(3d-2(M+1))(d-1)\} <0$, we have 
$$ 2g \ge -(4(d-1)-3M)+(M+1)(d-1)(d-1-M)+M^2. $$
By using the inequality $2g \le (d-1)(d-2)$, 
$$ Md^2-(M^2+3M+3)d+2M^2+5M+3 \le 0. $$
We have $d < M+3+3/M \le M+4$. 
Since $d \ge 2M+1$, we have $M <3$. 
This is a contradiction. 
We have $d-1=M(C)$. 
According to classification of curves with $M(C)=d-1$ by Ballico and Hefez \cite{ballico-hefez}, $C$ is projectively equivalent to the Fermat curve. 

We consider the case $g=0$. 
If $d-1>M(C)$, it follows from Fact \ref{rational Galois covering}(1) and Lemma \ref{twoGalois} that there exists a point $\hat{Q} \not\in \Delta=F(\hat{C})$ such that $e_{\hat{Q}}$ divides $(d-1)/M(C)$ which is not divisible by $p$. 
By Lemma \ref{index}(2), $M(C)=e_{\hat{Q}}$.  
This is a contradiction. 
Therefore, we have $d-1=M(C)$. 
According to \cite{fukasawa2} (Classification of curves with $M(C)=d-1$ by Ballico and Hefez \cite{ballico-hefez} is crucial), $C$ is projectively equivalent to the image of the morphism  
$$ \Bbb P^1 \rightarrow \Bbb P^2; \  (s:t) \mapsto (s^{p^e+1}:(s+t)^{p^e+1}:t^{p^e+1}). $$

\begin{remark}
The condition $\Delta \subset F(\hat{C})$ does not hold in general (see \cite[Example 1]{miura}). 
\end{remark} 

\section{The case where $p \ne 2$ and $M(C)=2$}
{\bf Strategy for the case $M(C)=2$.}
In Sections 5 and 6, we assume by contradiction that $3(2g-2)+3d \le \delta(C) < \infty$.
To achieve a contradiction, we use upper bounds of the number of flexes: $3(2g-2)+3d$, and the number of singular points of the dual curve $C^*$: $(d^*-1)(d^*-2)/2-g$. 
For example, if there exist two different points $\hat{Q}_1$, $\hat{Q}_2 \in \hat{C}_0$ with $e_{\hat{Q}_i} \ge 3$ (equivalently, ${\rm ord}_{\hat{Q}_i}r^*h \ge 3$ or ${\rm ord}_{\hat{Q}_i}r^*h \ge 4$ if $r(\hat{Q}_i)=P$, where $h$ is a linear polynomial defining the line $\overline{Pr(\hat{Q}_i)}$ or $T_PC$ if $r(\hat{Q}_i)=P$ by Lemma \ref{index}) for each Galois point $P$, then the proof finishes by Lemma \ref{twoGalois} and the upper bound $3(2g-2)+3d$. 
Later, we say roughly that {\it a Galois point $P=r(\hat{P})$ needs $i$ flexes}, if there exist $j$ ($1 \le j \le i$) different points $\hat{Q}_1$, \ldots, $\hat{Q}_j \in \hat{C}_0$ with $e_{\hat{Q}_k} \ge 3$ and $\sum_{k=1}^j (e_{\hat{Q}_k}-2) = i$ for the projection $\hat{\pi}_P$. 
Note that if $P \in r(\Delta)$ and a point $\hat{Q} \in \hat{C}$ satisfies $e_{\hat{Q}}=2$ for $\hat{\pi}_P$, then $e_{\hat{R}}=2$ for any point $\hat{R} \in \hat{\pi}_P^{-1}(\hat{\pi}_P(\hat{Q}))$, by Fact \ref{Galois covering}(1). 
When $s(\gamma)=1$, $\hat{C}_0=\hat{C}$ and there exist a Galois point $P$ and a line $\ell_P \ni P$ such that $e_{\hat{Q}}=2$ for each $\hat{Q} \in \hat{C}$ contained in the fiber for $\hat{\pi}_P$ corresponding to $\ell_P$, by Lemma \ref{index}, the line $\ell_P$ causes a singular point of $C^*$ with multiplicity at least $(d-1)/2$. 
By Lemma \ref{twoGalois}, $\ell_{P_1} \ne \ell_{P_2}$ if $P_1, P_2 \in r(\Delta)$ and $P_1 \ne P_2$. 
Then, we use the upper bound $(d^*-1)(d^*-2)/2-g$. 

\

In this section we assume that $p \ne 2$ and $M(C)=2$. 
Then, $q(\gamma)=1$ and $s(\gamma)=1$. 

\subsection{The case where there exists a singular point $Q$ with multiplicity $d-1$} 
In this case, $\hat{C}$ is rational, $Q$ is a unique singular point and $3d-6 \le \delta(C)$. 
It follows from Proposition \ref{cusp} that we may assume that the number of points of $r^{-1}(Q)$ is at least two.  
Let $P \in r(\Delta)$. 
It follows from Fact \ref{rational Galois covering}(4) that there exist two different points $P', P'' \in C_{\rm sm}$ such that $e_{P'}, e_{P''} \ge 2$. 
If for all $P$ there exists a pair $P', P''$ such that $e_{P'}, e_{P''} \ge 3$, then we need at least two flexes for each Galois point. 
This is a contradiction. 
Therefore, there exists a Galois point $P$ such that all pairs $P', P''$ satisfy $e_{P'}=e_{P''}=2$. 
By Fact \ref{rational Galois covering}(4), the Galois group $G_P$ is the dihedral group and $r^{-1}(Q)$ consists of exactly two points $\hat{Q}_1, \hat{Q}_2$. 
Then, for any Galois point $P \in r(\Delta)$ the Galois group $G_{P}$ is the dihedral group. 
Note that the cardinality of the set $\{ \sigma \in {\rm Aut}(\Bbb P^1) \ | \ \sigma(\hat{Q}_1)=\hat{Q}_1, \ \sigma(\hat{Q}_2)=\hat{Q}_2 \}$ is $(d-1)/2$. 
This implies that $\delta(C)=1$. 
This is a contradiction.

\subsection{The case where there exists No singular point with multiplicity $d-1$} 
Since $(2g-2)+2d-2<3(2g-2)+3d$, by Proposition \ref{unramified}, we have $\hat{C}_0=\hat{C}$.  
It follows from Fact \ref{plucker} that $d^* \le 2g-2+2d$.

We consider the case where $d-1$ is odd. 
By Fact \ref{Galois covering}(2), for each $\hat{P} \in \Delta$, we need at least one flex. 
Using Fact \ref{Galois covering}(1), if $d-1 > 3$, then we need at least two flexes. 
Then, we have $\delta(C) < 3(2g-2)+3d$. 
Therefore, we have $d-1=3$. 
If $p$ does not divide $d-1=3$, by Riemann-Hurwitz formula, then we have at least two flexes for each Galois point. 
This is a contradiction. 
If $p$ divides $d-1$, i.e. $p=3$, then $\binom{3}{2}=3 \equiv 0$ modulo $p=3$. 
In this case, we spend degree at least two for each Galois point, as the degree of the Wronskian divisor (\cite[Theorem 1.5]{stohr-voloch}). 
This is a contradiction. 

We consider the case where $d-1$ is even. 
For each Galois point $P$ for which all ramification indices are one or two, it follows from Riemann-Hurwitz formula for the projection $\hat{\pi}_P$ that we have 
$$ (2g-2+2(d-1))\times \frac{2}{d-1}$$
singular points of $C^*$ with multiplicities at least $\frac{d-1}{2}$ (see ``Strategy''). 
Let $\alpha$ be the number of such Galois points. 
By genus formula, we have the inequality 
$$ \alpha \times \frac{1}{2}\frac{d-1}{2}\left(\frac{d-1}{2}-1\right) \times (2g-2+2(d-1))\times\frac{2}{d-1} \le \frac{(d^*-1)(d^*-2)}{2}-g. $$
Since 
$$ \frac{(d^*-1)(d^*-2)}{2}-g \le \frac{(2g-2+2d-1)(2g-2+2d-2)}{2}, $$
we have 
$$ \alpha \le (2g-2+2d-1)\times \frac{2}{d-3}.$$
The number $\beta$ of Galois points needing at least two flexes is at most $(3(2g-2)+3d)/2$. 
Since $\alpha+\beta \ge 3(2g-2)+3d$, 
$$ (2g-2+2d-1)\times\frac{2}{d-3} \ge \frac{3(2g-2)+3d}{2}. $$
Then, we have 
$$ (6d-26)g+(3d^2-23d+30) \le 0. $$
This implies $d \le 5$. 
Since $d$ is odd, we have $d=5$.  

Let $d=5$. 
Then, $p$ does not divide $d-1=4$ and we have 
$$\alpha \le (2g-2+2\times 5-1) \times \frac{2}{5-3}=2g+7.$$ 
Let $a, b$ be the numbers of branch points for the projection $\hat{\pi}_P$ such that each ramification indices (at ramification points in the fibers of branch points) are four, two, respectively. 
By Fact \ref{Galois covering}(2) and Riemann-Hurwitz formula, we have 
$$ 2g-2=4 \times (-2)+3a+2b. $$
Therefore, the number $a$ is even. 
By this, if a Galois point needs at least two flexes, then the Galois points need at least four flexes. 
Therefore, $\beta \le (3(2g-2)+3d)/4$. 
Since $\alpha+\beta \ge 3(2g-2)+3d$, 
$$ 2g+7 \ge \frac{3(3(2g-2)+3\times 5)}{4}. $$
Therefore, we have $g=0$. 
Then, $d^* \le 8$, $\delta(C) \ge 9$ and $(a, b)=(2, 0)$ or $(0, 3)$ for each $P \in r(\Delta)$. 
Note that the number of Galois points of type $(a,b)=(2,0)$ is at most two, since the number of flexes is at most $9$. 
Therefore, the number of Galois points of type $(a, b)=(0, 3)$ is at least $7$.
These cause at least $21$ singular points of $C^*$. 
Since the number of singular points of $C^*$ is at most $(8-1)(8-2)/2=21$ and Galois points of type $(a, b)=(2, 0)$ cause the singularity of $C^*$, this is a contradiction. 

\section{The case where $p=2$ and $M(C)=2$}

Since $p=2$, we have $q(\gamma)=2$. 
By Fact \ref{plucker}, $2s(\gamma)d^* \le (2g-2+2d)$.

\subsection{The case where there exists a singular point $Q$ with multiplicity $d-1$} 
In this case, the proof is the same as in Subsection 5.1. 

\subsection{The case where there exists No singular point with multiplicity $d-1$} 
Since $(2g-2)+2d-2<3(2g-2)+3d$, by Proposition \ref{unramified}, we have $\hat{C}_0=\hat{C}$.  
It follows from Fact \ref{plucker} that $2s(\gamma)d^*=2g-2+2d$. 
If $g=0$, it follows from Fact \ref{rational Galois covering}(5) that we need at least two flexes for each Galois points. 
Therefore, we have $g \ge 1$. 
According to Kaji's theorem \cite[Theorem 4.1, Corollary 4.4]{kaji}, the geometric genus of $C^*$ is equal to that of $C$ and $s(\gamma)=1$ if $g \ge 2$.

We consider the case where $d-1$ is odd. 
By Fact \ref{Galois covering}(2), for each $\hat{P} \in \Delta$, we need at least one flex. 
Using Fact \ref{Galois covering}(1), if $d-1 > 3$, then we need at least two flexes. 
Then, we have $\delta(C) < 3(2g-2)+3d$. 
Therefore, we have $d-1=3$. 
Since $p=2$ does not divide $d-1=3$, by Riemann-Hurwitz formula, we have at least two flexes for each Galois point. 
This is a contradiction. 

We consider the case where $d-1$ is even. 
If a Galois point $P \in r(\Delta)$ needs at least one flex, then it needs at least two flexes. 
Therefore, there exists a point $P \in r(\Delta)$ for which the fiber of each branch point for $\hat{\pi}_P$ consists of exactly $(d-1)/2$ points.  
Here, we prove that $s(\gamma)=1$. 
If $g \ge 2$, we mentioned above. 
Assume that $g=1$. 
Then $s(\gamma)$ divides $(d-1)/2$, since all fibers of the morphism $\hat{C} \rightarrow \hat{C^*}$ induced by the dual map consists of exactly $s(\gamma)$ points. 
This implies that $d \equiv 1$ modulo $s(\gamma)$.  
On the other hand, $d=(2g-2+2d)/2=s(\gamma)d^* \equiv 0$ modulo $s(\gamma)$. 
Therefore, $s(\gamma)=1$. 

Now we have $d^*=(2g-2+2d)/2$. 
Let $\alpha$ be the number of Galois points $P$ for which all ramification indices are one or two. 
By genus formula, we have the inequality 
$$ \alpha \times \frac{1}{2}\frac{d-1}{2}\left(\frac{d-1}{2}-1\right) \le \frac{(d^*-1)(d^*-2)}{2}-g. $$
Since 
$$ \frac{(d^*-1)(d^*-2)}{2}-g \le \frac{(2g+2d-4)(2g+2d-6)}{8}, $$
we have 
$$ \alpha \le \frac{(2g+2d-4)(2g+2d-6)}{(d-1)(d-3)}. $$
The number $\beta$ of Galois points needing at least two flexes is at most $(3(2g-2)+3d)/2$. 
Since $\alpha+\beta \ge 3(2g-2)+3d$, 
$$\frac{(2g+2d-4)(2g+2d-6)}{(d-1)(d-3)} \ge \frac{3(2g-2)+3d}{2}. $$
Then, we have 
$$ (2g)\left\{(2g)+(4d-10)-\frac{3}{2}(d-1)(d-3)\right\}+(d-2)(d-3)\left\{4-\frac{3}{2}(d-1)\right\} \ge 0. \ \ (*)$$
Therefore, 
$$ 2g+(4d-10)-\frac{3}{2}(d-1)(d-3) >0. $$
Using the inequality $2g \le (d-1)(d-2)$, we have 
$$ d^2-14d+25 <0. $$
This implies $d \le 11$. 

Let $d=11$. 
Coming back to the inequality $(*)$, we have
$(2g)(2g+34-120)+9\times8(4-15) \ge 0$.  
Since $2g \le (11-1)(11-2)=90$, this is a contradiction. 

Let $d=9$. 
If a Galois point needs at least two flexes, then it needs at least four flexes. 
Similarly to above, we have the inequality 
$$\frac{(2g+2d-4)(2g+2d-6)}{(d-1)(d-3)} \ge \frac{3(3(2g-2)+3d)}{4}. $$
Since $d=9$, 
$$\frac{(2g+14)(2g+12)}{8 \times 6} \ge \frac{18g+63}{4}. $$
We have 
$$ g^2-41g-147 \ge 0. $$
Since $g \le 28$, this is a contradiction. 

Let $d=7$. 
Now, we have 
$$ \alpha \le \frac{1}{24}((2g+10)(2g+8)-8g).$$
Let $\beta_1$ be the number of Galois points $P$ having a unique line $\ell \ni P$ such that the fiber corresponding to $\ell$ for $\hat{\pi}_P$ consists of two points with ramification index three. 
If $P$ is such a Galois point, since $\hat{\pi}_P$ is tamely ramified at such flexes, there exists a tangent line $\ell'$ such that the fiber corresponding to $\ell'$ for $\hat{\pi}_P$ consists of three points with index two. 
Then, the Galois point causes at least two singular points such that the multiplicity of one of them is at least three. 
Therefore, we have 
$$ \beta_1 \le \frac{1}{32}((2g+10)(2g+8)-8g). $$
If a Galois point needs at least four flexes, then it needs at least six. 
Let $\beta_2$ be the number of Galois points needing at least six flexes. 
Therefore, we have two inequalities 
$$ \alpha+\beta_1+\beta_2 \ge 3(2g-2)+21, \ \ 2\beta_1+4\beta_2 \le 3(2g-2)+21.  $$
Then, 
$$ 2\beta_1+4\{(6g+15)-(\alpha+\beta_1)\} \le 6g+15. $$
We have 
$$ 4\alpha+2\beta_1 \ge 18g+45. $$
Since 
$$4\alpha+2\beta_1 \le \left(\frac{1}{6}+\frac{1}{16}\right)\{(2g+10)(2g+8)-8g\}, $$
we have 
$$ 11\{(g+5)(g+4)-2g\} \ge 216g+540. $$
Therefore, 
$$ g(11g-139)-320 \ge 0.  $$
Since $g \le 15$, we have $g=15$ and $C$ is smooth. 
According to \cite[Theorem 3]{fukasawa1}, this is a contradiction.

Let $d=5$. 
If a Galois point needs at least two flexes, since $\binom{4}{2}=6\equiv 0$ modulo $2$, we need degree at least three as Wronskian divisor. 
Therefore, we have 
$$ \frac{(2g+6)(2g+4)}{4\times 2}-g \ge \frac{2(3(2g-2)+15)}{3}. $$
Then, 
$$ g^2-5g-6 \ge 0. $$
Since $g \le 6$, we have $g=6$ and $C$ is smooth. 
According to \cite[Theorem 3]{fukasawa1}, this is a contradiction.

\end{document}